\def\qed{\hfill $\vcenter{\hrule height .3mm
\hbox {\vrule width .3mm height 2.1mm \kern 2mm \vrule width .3mm
height 2.1mm} \hrule height .3mm}$ \bigskip}
\def \RR {\mathbb R}
\def \EE {\mathbb E}
\def \ZZ {\mathbb Z}
\def \Var {\mathrm{Var}}
\def \WW {\mathcal{M}}
\def \eps {\varepsilon}
\def \V {\mathrm{V}}
\def \DC {\mathcal{C}_n}
\def \Id {\mathrm{I}_n}
\def \COV {\mathrm{Cov}}
\def \SIGN {\mathrm{sign}^n}
\def \Cor {\mathrm{Cor}}
\def \ff {{\tilde f}}
\def \gg {{\tilde g}}
\def \FF {\mathcal{F}}
\def \I {\mathrm{I}}
\def \Tr {\mathrm{Tr}}
\def \KL {\mathcal{D}_{KL}}
\newtheorem{theorem}{Theorem}
\newtheorem{lemma}[theorem]{Lemma}
\newtheorem{fact}[theorem]{Fact}
\newtheorem{conjecture}[theorem]{Conjecture}
\newtheorem{proposition}[theorem]{Proposition}
\theoremstyle{definition}
\theoremstyle{remark}
\newtheorem{remark}[theorem]{Remark}
\long\def\symbolfootnotetext[#1]#2{\begingroup
\def\thefootnote{\fnsymbol{footnote}}\footnotetext[#1]{#2}\endgroup}
\title{Second-order bounds on correlations between increasing families}
\author{Ronen Eldan\thanks{Weizmann Institute of Science. Supported by a European Research Council Starting Grant (ERC StG) and by the an Israel Science Foundation grant no. 715/16}}
\begin{document}

\maketitle
\begin{abstract}
Harris's correlation inequality states that any two monotone functions on the Boolean hypercube are positively correlated. Talagrand \cite{Talcorr} started a line of works in search of quantitative versions of this fact by providing a lower bound on the correlation in terms of the influences of the functions. A famous conjecture of Chv\'{a}tal \cite{Chvatal} was found by Friedgut, Kahn, Kalai and Keller \cite{FKKK} to be equivalent to a certain strengthening of Talagrand's bound, conjectured to hold true when one of the functions is antipodal (hence $g(x) = 1-g(-x)$). Motivated by this conjecture, we strengthen some of those bounds by giving estimates that also involve the second order Fourier coefficients of the functions. In particular we show that in the bounds due to Talagrand and due to Keller, Mossel and Sen \cite{KMS14}, a logarithmic factor can be replaced by its square root when one of the functions is antipodal. Our proofs follow a different route than the ones in the literature, and the analysis is carried out in the Gaussian setting.
\end{abstract}

\section{Introduction}
Define $\DC = \{-1,1\}^n$, and denote the uniform measure on $\DC$ by $\mu$. For a function $f:\DC \to \{0,1\}$, which is usually called a Boolean function, we define $\EE_\mu[f] = \int_{\DC} f(x) d \mu(x)$ and often abbreviate $\EE=\EE_\mu$. The discrete derivatives of a function are defined by
$$
\partial_i f(x) = f(x;x_i \to 1) - f(x;x_i \to -1).
$$
and the discrete gradient $\nabla f(x) = \left ( \partial_1 f(x),...,\partial_n f(x) \right )$. We say that a Boolean function is \emph{increasing} if $\partial_i f(x) \geq 0$ for all $i \in [n]$ and all $x \in \DC$.

Harris's correlation inequality states that any two increasing functions $f$ and $g$ must like each other, in the sense that one has $\Cor(f,g) \geq 0$, where
$$
\Cor(f,g) := \EE [fg]-\EE[f]\EE[g].
$$
Talagrand \cite{Talcorr} initiated a line of work which attempts to quantify to what extent this inequality holds true, which is also the topic of the present work. 

The \emph{influence} of the $i$-th coordinate is defined as
$$
\I_i(f) := 2 \int |\partial_i f(x)| d \mu(x).
$$
Talagrand \cite{Talcorr} proved that for increasing $f,g$
\begin{equation}\label{eq:tal1}
\Cor(f,g) \geq  c \frac{ \sum_i \I_i(f) \I_i(g) }{ \log \left ( \frac{e}{ \sum_i \I_i(f) \I_i(g)} \right )},
\end{equation}
where in the last inequality, as in the rest of this paper, the letter $c$ will denote a positive universal constant whose value may change between different appearances.

A rather similarly-looking inequality by Keller, Mossel and Sen \cite{KMS14} states that
\begin{equation}\label{eq:KMS}
\Cor(f,g) \geq c \sum_i \frac{\I_i(f) \I_i(g) }{ \sqrt{\log(e/\I_i(f)) \log (e/\I_i(g)) } } .
\end{equation}
As explained in \cite{KKM15}, both of those bounds are sharp, and none of the two implies the other. In the same paper strengthening are obtained in the case the the functions exhibit some symmetries.

A function $f$ is called \emph{antipodal} if $f(x) = 1-f(-x)$ for all $x$. In \cite{FKKK}, the authors prove that the following conjecture is equivalent to the well-known Chv\'atal's conjecture in combinatorics (for a formulation of the original form of the conjecture, we refer to their paper).  
\begin{conjecture} (\cite{FKKK})
If $f,g$ are increasing and $g$ is antipodal, then
$$
\Cor(f,g) \geq \frac{1}{4} \min_i \I_i(f).
$$
\end{conjecture}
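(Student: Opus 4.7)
The plan combines two ingredients: an exact conditioning identity from restricting on a single coordinate, and a continuous Gaussian representation.

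First, fix any coordinate $i$, and write $f = \tfrac{1+x_i}{2} f_+ + \tfrac{1-x_i}{2} f_-$ with $f_\pm$ the slices at $x_i=\pm1$, and similarly for $g$. Antipodality of $g$ forces $\EE[g_+]+\EE[g_-]=1$, and a short calculation yields the exact identity
\[
\Cor(f,g) \;=\; \tfrac12\Cor(f_+,g_+) + \tfrac12\Cor(f_-,g_-) + c_0\,\I_i(f)\,\I_i(g)
\]
for an absolute constant $c_0>0$. Since $f_\pm,g_\pm$ remain increasing, the first two terms are nonnegative by Harris, recovering the Talagrand-type bound $\Cor(f,g)\gtrsim \I_i(f)\I_i(g)$. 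However, iterating this decomposition destroys antipodality at the subcubes (only the top-level $g$ is antipodal), so by itself it cannot reach $\min_i \I_i(f)/4$: every chain of inequalities through these identities pays a factor of $\I_i(g)\le 2$ which is exactly what must be avoided.

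To exploit antipodality more strongly I would transfer the problem to the Gaussian setting (via an invariance principle) and work with monotone $F,G:\RR^n\to[0,1]$, where $G$ is odd and $\I_i(F)=\EE[\partial_i F(X)]$. There the Clark--Ocone formula along a standard Brownian motion $(B_t)_{t\in[0,1]}$ with $B_1=X$ gives
\[
\Cor(F,G) \;=\; \sum_{i=1}^n \int_0^1 \EE\bigl[M^{F,i}_t \cdot M^{G,i}_t\bigr]\,dt, \qquad M^{H,i}_t := \EE[\partial_i H(X)\mid\cF_t],
\]
with every $M^{H,i}_t$ a nonnegative martingale of constant mean $\I_i(H)$. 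Monotonicity makes every summand nonnegative, so one may drop all but the coordinate $i^\star := \arg\min_i \I_i(F)$, leaving a single one-dimensional Brownian integral to lower-bound. The idea would then be to exploit that $\partial_{i^\star}G$ is an \emph{even} function of $X$ (a direct consequence of antipodality), together with the global constraint $\EE[G]=\tfrac12$ (forcing substantial mass on the linear Hermite component of $G$), to argue that $M^{G,i^\star}_t$ is close to $1$ on a set of measure bounded below by a constant, which would yield $\int_0^1 \EE[M^{F,i^\star}_t M^{G,i^\star}_t]\,dt \gtrsim \I_{i^\star}(F)$.

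The main obstacle is exactly the last step: producing an effective multiplicative factor of order $1/\I_{i^\star}(G)$ from antipodality alone. Every standard tool (hypercontractivity, log-Sobolev, Ornstein--Uhlenbeck interpolation, and the estimates \eqref{eq:tal1} and \eqref{eq:KMS} themselves) yields correlation lower bounds of sum-of-products type, whereas the conjecture demands a bound insensitive to $\I_{i^\star}(G)$. Bridging this gap --- effectively the content of Chv\'atal's conjecture --- would, I suspect, require either a genuinely new coupling between $X$ and $-X$ along the Brownian trajectory that reads off the antipodal symmetry, or a coordinate-selection device more subtle than ``drop all but $i^\star$''. Failing that, the realistic near-term target is a weakened version of the bound with an extra logarithmic or polynomial factor in $1/\I_{i^\star}(G)$, which is the flavor of result the paper appears to pursue.
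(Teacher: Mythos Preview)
The statement you are attempting to prove is stated in the paper as a \emph{conjecture}, not a theorem; the paper does not prove it and explicitly notes that it is equivalent to Chv\'atal's conjecture. There is therefore no ``paper's own proof'' to compare against. What the paper actually proves under the antipodality hypothesis is the much weaker Theorem~\ref{thm:symm}, namely
\[
\Cor(f,g) \;\geq\; c\,\frac{\sum_i \I_i(f)\I_i(g)}{\sqrt{\log\bigl(2e/\sum_i \I_i(f)\I_i(g)\bigr)}},
\]
obtained by combining the second-order bound of Theorem~\ref{thm:mainboundTal} with the observation (Fact~\ref{fact:symm}) that antipodality kills the degree-two Fourier coefficients, so that $\WW_2(f,g)=0$.

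Your write-up is not a proof but an honest sketch with a declared gap, and you locate the gap correctly: every step you outline produces a lower bound of the form $\I_i(f)\I_i(g)$ (times at best a logarithmic correction), and you never show how to remove the factor $\I_i(g)$. That removal is precisely the content of the conjecture, so what you have is a restatement of the difficulty rather than progress on it. Two side remarks: (i) your conditioning identity is correct (with $c_0=1/16$ under the paper's normalization), but antipodality plays no role in it, so it cannot be where the antipodal gain enters; (ii) your Clark--Ocone/Brownian representation is exactly the paper's formula~\eqref{eq:Cor1}, and the paper exploits it not to attack the conjecture directly but to run a second-order ODE argument (Lemma~\ref{lem:gronwall} together with the level-$1{:}3$ inequality, Proposition~\ref{prop:level2}) that yields the $\sqrt{\log}$ improvement. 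That is the ``weakened version with an extra factor'' you anticipate in your last sentence.
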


The objective of the present work is twofold. First, we introduce a new approach for obtaining the inequalities \eqref{eq:tal1} and \eqref{eq:KMS}. Second, motivated by the above conjecture, we give a refined bound in terms of the second-degree Fourier coefficients of the functions $f,g$ which allow us to exploit the antipodality of the function. This will allow us to prove the following theorem.

\begin{theorem} \label{thm:symm}
If $f,g$ are increasing and $g$ is antipodal, then
\begin{equation}\label{eq:mainsymm}
\Cor(f,g) \geq c \frac{ \sum_{i \in [n]} \I_i(f) \I_i(g) }{ \sqrt{\log\left ( \frac{2e}{  \sum_{i \in [n]} \I_i(f) \I_i(g) } \right )} }.
\end{equation}
\end{theorem}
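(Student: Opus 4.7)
I would work in the Gaussian setting, which the abstract flags as the venue for the analysis; the reduction from the Boolean cube is the standard sign--Gaussian coupling, under which monotonicity, antipodality and the relevant influence quantities are preserved up to universal constants. In the Gaussian setting the starting identity is the Ornstein--Uhlenbeck heat-flow representation
\[
\Cor(f,g) \;=\; 2\int_0^\infty \EE\bigl[\nabla P_t f \cdot \nabla P_t g\bigr]\ud t \;=\; 2\int_0^\infty e^{-2t}\sum_{i=1}^{n} \EE\bigl[P_t(\partial_i f)\cdot P_t(\partial_i g)\bigr]\ud t,
\]
where $P_t$ is the OU semigroup on $\RR^n$ and the second equality uses the commutation $\partial_i P_t = e^{-t}P_t\partial_i$. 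For increasing $f,g$ each integrand is pointwise nonnegative, which already recovers Harris's inequality; the task is the quantitative lower bound.

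\textbf{Antipodality as a Hermite support condition.} Differentiating $g(x)=1-g(-x)$ in direction $i$ gives $(\partial_i g)(-x)=\partial_i g(x)$, so $\partial_i g$ is an \emph{even} function on $\RR^n$. Since $H_\alpha(-x)=(-1)^{|\alpha|}H_\alpha(x)$, the Hermite expansion of $\partial_i g$ is supported on multi-indices of even total degree, and hence the mean-zero part $\partial_i g - \EE[\partial_i g]$ lies in the span of Hermite polynomials of degree $\ge 2$. Consequently
\[
\bigl\|P_t(\partial_i g)-\EE[\partial_i g]\bigr\|_2 \;\le\; e^{-2t}\,\|\partial_i g\|_2,
\]
whereas the generic $\partial_i f$ only admits the analogous bound with $e^{-t}$. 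This extra factor of $e^{-t}$ is the second-order ingredient the paper alludes to, and it is the mechanism through which antipodality acts.

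\textbf{Bounding the cross term.} Writing $a_i=\EE[\partial_i f]$, $b_i=\EE[\partial_i g]$ and decomposing $P_t\partial_i f = a_i + u_i(t)$, $P_t\partial_i g = b_i + v_i(t)$, the time-$t$ integrand splits as $\sum_i a_i b_i + \sum_i \EE[u_i(t)v_i(t)]$. I would bound $|\sum_i \EE[u_i v_i]|$ via a H\"older inequality $|\EE[u_i v_i]|\le \|u_i\|_p\,\|v_i\|_{p^*}$, combined with the Nelson--Gross hypercontractive inequality $\|P_t h\|_q \le \|h\|_{1+(q-1)e^{-2t}}$, applied in such a way that the degree-$\ge 2$ Hermite support of $v_i(t)$ genuinely contributes an additional factor of $e^{-t}$ beyond what is available for $u_i(t)$. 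Together with the identity $\|\partial_i f\|_s = a_i^{1/s}$ coming from $\partial_i f\in\{0,1\}$, the resulting optimization over the dual H\"older exponents and a cutoff time then yields a bound whose denominator is $\sqrt{\log(2e/\sum_i \I_i(f)\I_i(g))}$, in place of the $\log(2e/\sum_i \I_i(f)\I_i(g))$ produced by the same scheme without antipodality (which is essentially the alternative derivation of Talagrand's \eqref{eq:tal1} announced in the introduction).

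\textbf{Main obstacle.} The real work is the H\"older / hypercontractive optimization itself. Talagrand's $\log$ bound already depends on a delicate choice of the dual exponents $(p,p^*)$ as a function of $\sum_i \I_i(f)\I_i(g)$, and one must redo this optimization so that the degree-$\ge 2$ improvement from antipodality propagates into the exponent of the $\log$ rather than being absorbed by hypercontractive constants or by the $s$-dependence in $a_i^{1/s}$. The remaining technical steps---mollifying $f,g$ via a small $P_\eps$ so that $\partial_i f,\partial_i g$ are honest $L^2$ functions, and making the Boolean-to-Gaussian reduction rigorous---should be routine.
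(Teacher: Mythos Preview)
Your setup is sound and overlaps with the paper's: the Gaussian reduction is the same, the heat-flow identity you write is (up to a time change) the expectation of the paper's martingale representation \eqref{eq:Cor1}, and your observation that antipodality forces $\partial_i g$ to be even is precisely Fact~\ref{fact:symm}, rephrased as the vanishing of the degree-$1$ Hermite coefficients of $\partial_i g$ (equivalently, $\WW_2(g)=0$). So you have correctly located \emph{what} antipodality gives you. The gap is in \emph{how} you plan to use it.

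The improvement you extract, $\|P_t(\partial_i g)-b_i\|_2\le e^{-2t}\|\partial_i g\|_2$ instead of $e^{-t}\|\partial_i g\|_2$, is a multiplicative gain of $e^{-t}$ in an $L^2$ norm. At the time scale $t\sim 1/\sqrt{\log(e/\WW_1)}$ that you are aiming for, this gain is $1-O(1/\sqrt{\log})$, a constant factor; it cannot by itself turn a $1/\log$ into a $1/\sqrt{\log}$. Moreover, the H\"older/hypercontractive scheme you sketch controls the cross term $\sum_i\EE[u_iv_i]$ through $L^p$ norms with $p$ chosen as a function of $t$; the spectral-gap improvement from the degree-$\ge 2$ support is an $L^2$ statement and does not transfer to $\|v_i\|_{p^*}$, nor does projection onto degree $\ge 2$ behave well in $L^p$. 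Your ``main obstacle'' paragraph essentially acknowledges that the optimization has not been carried out, and I do not see a way to push it through: any direct bound on the cross term at time $t$ is a first-order estimate on $p(t)-p(0)$ (where $p(t)$ is the quantity in \eqref{eq:defp}), and first-order control yields $T_*\sim 1/\log$ regardless of antipodality.

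What the paper actually does is a second-order argument. Writing $p(t)=\EE\langle M_t^{(f,1)},M_t^{(g,1)}\rangle$, one has $p(0)=\WW_1(f,g)$ and $p'(0)=\WW_2(f,g)$; antipodality makes $p'(0)=0$. The crucial new ingredient is a \emph{lower} bound on the third-level correlation, Proposition~\ref{prop:level2}, which gives $\langle Q^{(3)}(f),Q^{(3)}(g)\rangle \ge -C\,\langle Q^{(1)}(f),Q^{(1)}(g)\rangle\log\bigl(e/\langle Q^{(1)}(f),Q^{(1)}(g)\rangle\bigr)$. Via \eqref{eq:ptdd2} this becomes the differential inequality $p''\ge -p'-Kp\log(e/p)$, and a Gr\"onwall-type lemma (Lemma~\ref{lem:gronwall}) then shows that with $p'(0)=0$ one keeps $p(t)\ge p(0)/2$ for $t\lesssim 1/\sqrt{K\log(e/p(0))}$, which integrates to the $\sqrt{\log}$ bound. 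The Level~1:3 inequality itself is not proved by hypercontractivity but by Talagrand's transportation--entropy inequality (Lemma~\ref{lem:lvl21}), and the point (see the Remark following that lemma) is that only a one-sided \emph{lower} bound on $\Tr(H_XH_Y)$ is needed, which is strictly better than the two-sided estimates underlying the classical arguments. This second-derivative control is the idea your plan is missing; a H\"older/hypercontractive bound on the cross term does not access it.
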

Compared to Talagrand's bound \eqref{eq:tal1}, the logarithmic factor in the denominator is replaced by its square root. Theorem \ref{thm:symm} follows from a more general second-order refinement of Talagrand's bound. Define
$$
\V(f)_{i,j} = \EE \left [\partial_i \partial_j f \right ],
$$
which is also the matrix of second-order Fourier coefficients of the function $f$. Our refinement reads,
\begin{theorem} \label{thm:mainboundTal}
For any increasing Boolean functions $f,g$,
\begin{equation}\label{eq:mainboundTal}
\Cor(f,g) \geq c \min \left ( \frac{\WW_1(f,g)} { \sqrt{\log\left ( \frac{e}{\WW_1(f,g)} \right )} } , \frac{\WW_1(f,g)^2}{ |\WW_2(f,g)| }  \right )
\end{equation}
where
$$
\WW_1(f,g) = \sum_i \I_i(f) \I_i(g), ~~ \WW_2(f,g) = \langle \V(f), \V(g) \rangle_{HS}.
$$
\end{theorem}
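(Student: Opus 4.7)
My plan is to work in the Gaussian setting, as hinted in the abstract. First, I lift $f,g$ to coordinate-wise increasing functions $F,G$ on $(\RR^n,\gamma)$ via a mild regularisation (e.g.\ by applying a short Ornstein--Uhlenbeck smoothing to the natural multilinear extension of $f$), so that $\I_i(f)$ and $\V(f)_{ij}$ match, up to universal constants, the degree-$1$ and degree-$2$ Hermite coefficients of $F$. Writing $P_t$ for the OU semigroup, the spectral decomposition on Hermite chaoses yields the identity
\begin{equation*}
\Cor(F,G) \;=\; \int_0^\infty e^{-t}\,\sum_i \EE_\gamma\!\left[\partial_i F \cdot P_t \partial_i G\right]\,\ud t.
\end{equation*}
Because $F,G$ are coordinate-wise monotone and $P_t$ preserves positivity, the integrand is pointwise non-negative, so $\Cor(F,G)$ is bounded below by the integral on any initial interval $[0,T]$, for a threshold $T>0$ to be chosen.

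The key new ingredient is a second-order Taylor expansion of $P_t\partial_i G$ in $t$. Writing $P_t=\exp(tL)$ with OU generator $L=\Delta - x\cdot\nabla$, Gaussian integration by parts yields
\begin{equation*}
\sum_i \EE_\gamma\!\left[\partial_i F\cdot L\partial_i G\right] \;=\; -\sum_{i,j}\EE_\gamma\!\left[\partial_{ij}^2 F\cdot \partial_{ij}^2 G\right],
\end{equation*}
which, once higher-degree Hermite contributions are absorbed into the remainder, matches $-\WW_2(f,g)$ up to a universal constant. The zero-th order term gives $\sum_i\EE[\partial_i F\cdot\partial_i G]\gtrsim \WW_1(f,g)$; at the Boolean level this is the elementary estimate $\EE[\partial_i f\cdot\partial_i g]\geq\EE[\partial_i f]\EE[\partial_i g]=\I_i(f)\I_i(g)/4$, obtained by coordinate-wise FKG applied to $\partial_i f,\partial_i g\in\{0,1\}$ viewed as functions of the remaining $n-1$ variables. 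Integrating from $0$ to $T$ therefore gives
\begin{equation*}
\Cor(F,G)\;\gtrsim\; T\cdot \WW_1(f,g)\;-\;C\,T^2\,|\WW_2(f,g)|\;-\;R(T),
\end{equation*}
where $R(T)$ collects third- and higher-order terms in the expansion. Optimising $T$ gives the two regimes: when $R(T)$ is the binding constraint one takes $T\sim\sqrt{\log(e/\WW_1)}$ and recovers $\WW_1/\sqrt{\log(e/\WW_1)}$ --- an improvement over Talagrand's $\WW_1/\log(e/\WW_1)$, which corresponds to using only the zero-th order term; when instead the quadratic correction binds first one balances the two at $T\sim\WW_1/|\WW_2|$ to produce $\WW_1^2/|\WW_2|$.

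The main obstacle I anticipate is the precise control of the remainder $R(T)$ uniformly for $t$ up to the logarithmic threshold. Boolean functions (and their Gaussian lifts) are not smooth on a $t$-independent scale, so the iterated derivatives $L^k\partial_i G$ grow with $k$ and one has to trade this against the sharp $L^p$--$L^q$ estimates provided by Gaussian hypercontractivity. Showing that the third-order term in the Taylor expansion does not swamp the gain obtained from the second-order term --- which is exactly where the $\sqrt{\log}$ replaces $\log$ --- is the technical heart of the argument. A secondary but unavoidable step is verifying that the Gaussian lift preserves $\WW_1$ and $\WW_2$ up to universal constants, via a direct comparison of Hermite and Walsh expansions on low-degree truncations.
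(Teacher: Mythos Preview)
Your overall framework --- lift to Gaussian space, write $\Cor(F,G)=\int_0^\infty e^{-t}\sum_i\EE_\gamma[\partial_iF\cdot P_t\partial_iG]\,dt$ with non-negative integrand, truncate at time $T$ and optimise --- is indeed how the paper proceeds (their Proposition~\ref{prop:intcov}), and the Boolean-to-Gaussian transfer via $\mathrm{sign}^n$ is done exactly as you outline. There are, however, two genuine problems with the proposal.

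First, the claim that $\EE[\partial_if\cdot\partial_ig]\geq\EE[\partial_if]\,\EE[\partial_ig]$ follows from ``coordinate-wise FKG'' is false: for increasing $f$ the function $\partial_if$ need not be monotone in the remaining variables (for $f=\mathrm{OR}$ on two bits, $\partial_1f$ is \emph{decreasing} in $x_2$), and the inequality itself fails --- taking $f=\mathrm{OR}$, $g=\mathrm{AND}$ on $\{-1,1\}^2$ gives $\EE[\partial_1f\,\partial_1g]=0<\tfrac14=\EE[\partial_1f]\,\EE[\partial_1g]$. So your zeroth-order term is not bounded below by $\WW_1$ in general. The paper sidesteps this entirely by working instead with the stochastic processes $M_t^{(h,k)}$, whose associated $p_1(t)$ satisfies $p_1(0)=\langle\WW_1(f),\WW_1(g)\rangle$ and $p_1'(0)=\langle\WW_2(f),\WW_2(g)\rangle_{HS}$ \emph{exactly}, with no contamination from higher Hermite degrees.

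Second, and more fundamentally, controlling the remainder $R(T)$ via hypercontractivity cannot produce the $\sqrt{\log}$. Two-sided level inequalities --- which is all that $L^p$--$L^q$ estimates provide --- give at best $\bigl|\langle Q^{(3)}(f),Q^{(3)}(g)\rangle\bigr|\leq C\,\langle Q^{(1)}(f),Q^{(1)}(g)\rangle\cdot\log^2(\cdot)$, obtained by iterating the degree-$k$ to degree-$(k{+}1)$ bound twice. A second derivative of size $p(0)\log^2(e/p(0))$ keeps $p(t)\geq p(0)/2$ only up to $t\leq c/\log$, which reproduces Talagrand's $\WW_1/\log$ and nothing better. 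The paper's improvement rests on a new \emph{one-sided} inequality: using Talagrand's transport--entropy inequality they prove (Lemma~\ref{lem:lvl21}) that $\Tr(H_XH_Y)\geq-20\bigl(\KL(X||\gamma)+\KL(Y||\gamma)\bigr)$, and then lift this to
\[
\langle Q^{(3)}(f),Q^{(3)}(g)\rangle\;\geq\; -C\,\langle Q^{(1)}(f),Q^{(1)}(g)\rangle\,\log\!\left(\frac{e}{\langle Q^{(1)}(f),Q^{(1)}(g)\rangle}\right)
\]
with a \emph{single} logarithm (Proposition~\ref{prop:level2}). The remark following Lemma~\ref{lem:lvl21} emphasises that the matching upper bound is quadratically worse, so this asymmetry between upper and lower bounds is the entire mechanism behind the gain; hypercontractivity cannot see it. Feeding the one-sided bound into $p''\geq -p'-Kp\log(e/p)$ and applying the Gronwall-type Lemma~\ref{lem:gronwall} then gives $p(t)\geq p(0)/2$ up to $t\leq c/\sqrt{\log}$, hence $\WW_1/\sqrt{\log}$. (Incidentally, your optimal $T$ in that regime should be of order $1/\sqrt{\log(e/\WW_1)}$, not $\sqrt{\log(e/\WW_1)}$.)
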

\begin{remark}
It follows from \cite[Theorem 2.4]{Talcorr} that, in the notation of the theorem, one has 
$$
\WW_2(f,g) \leq C \WW_1(f,g) \log \left ( \frac{e}{\WW_1(f,g)} \right ).
$$
Therefore, equation \eqref{eq:mainboundTal} is a strict strengthening of Talagrand's bound \eqref{eq:tal1}.
\end{remark}

The above theorem implies Theorem \ref{thm:symm} via the following observation.
\begin{fact} \label{fact:symm}
	If $f(x)$ is antipodal then $\V(f) = 0$.
\end{fact}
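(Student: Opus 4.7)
The natural approach is Fourier-analytic. I would expand $f = \sum_{S \subseteq [n]} \hat f(S) \chi_S$, where $\chi_S(x) = \prod_{i \in S} x_i$. Antipodality $f(x) + f(-x) = 1$ together with $\chi_S(-x) = (-1)^{|S|}\chi_S(x)$ yields, after integrating against $\chi_S$ under the symmetric measure $\mu$, the identity
$$\hat f(S) = (-1)^{|S|}\bigl(\mathbf{1}_{S=\emptyset} - \hat f(S)\bigr).$$
This forces $\hat f(S) = 0$ for every nonempty set $S$ of even cardinality (while $\hat f(\emptyset) = 1/2$ and the odd-level coefficients are unconstrained).

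The next step is to translate this into a statement about $\V(f)$. The diagonal entries vanish for a tautological reason: $\partial_i f$ does not depend on $x_i$, so $\partial_i^2 f \equiv 0$ and $\V(f)_{i,i} = 0$. For $i \neq j$, the usual Fourier formula $\partial_i f = 2 \sum_{S \ni i}\hat f(S)\chi_{S\setminus\{i\}}$ iterates to
$$\partial_i \partial_j f = 4 \sum_{S \supseteq \{i,j\}}\hat f(S)\,\chi_{S\setminus\{i,j\}},$$
and taking expectation leaves only the $S = \{i,j\}$ term, giving $\V(f)_{i,j} = 4 \hat f(\{i,j\})$. Since $\{i,j\}$ is a nonempty set of even cardinality, the first paragraph finishes the job.

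There is essentially no obstacle here — once the Fourier dictionary is in place, this is a one-line fact. A reader who prefers to avoid Fourier altogether can argue directly: antipodality propagates through the definition of $\partial_j$ (the two sign flips in the difference cancel) to give $\partial_j f(x) = \partial_j f(-x)$, and applying the same reasoning a second time yields $\partial_i \partial_j f(x) = -\partial_i \partial_j f(-x)$ for $i \neq j$; integrating against the symmetric measure then forces $\V(f)_{i,j} = -\V(f)_{i,j} = 0$. Either route is clean, and the fact is really bookkeeping rather than a locus of difficulty in the paper.
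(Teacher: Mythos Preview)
Your proof is correct and is essentially the paper's argument written out in full: the paper simply notes that an antipodal $f$ has Fourier support on odd degrees, so the degree-two coefficients (and hence $\V(f)$) vanish. Your additional direct parity argument is also fine and amounts to the same observation without the Fourier language.
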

\begin{proof}
If $f$ is antipodal, its Fourier transform is supported on odd degrees, and hence the second degree coefficients vanish.	
\end{proof}
\vspace{-10pt} \noindent
The combination of the above fact with the bound \eqref{eq:mainboundTal} immediately gives \eqref{eq:mainsymm}. For $i \in [n]$ denote 
$$
\V_i(f) = \left ( \EE \partial_1 \partial_i f, \dots, \EE \partial_n \partial_i f  \right ).
$$
We also prove the following improvement of the bound \eqref{eq:KMS}.
\begin{theorem} \label{thm:mainbound}
For any increasing Boolean functions $f,g$,
\begin{equation}\label{eq:mainbound}
\Cor(f,g) \geq c \sum_{i \in [n]} \I_i(f) \I_i(g) \min \left ( \frac{1}{\sqrt{\log\left ( \frac{e}{\I_i(f)\I_i(g)} \right )}}, ~ \frac{\I_i(f) \I_i(g)}{ |\langle \V_i(f), \V_i(g) \rangle| }  \right ).
\end{equation}
\end{theorem}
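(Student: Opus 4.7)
The plan is to run the argument of Theorem \ref{thm:mainboundTal} coordinate-by-coordinate rather than globally. First I would pass to the Gaussian setting via the standard sign-embedding, under which the influences $\I_i$ and the second-order coefficients $\V_i(f)$ are preserved up to universal constants, monotonicity of $f$ and $g$ is preserved, and the Ornstein--Uhlenbeck semigroup $(P_t)_{t\ge 0}$ becomes available. The Gaussian analogue of Harris' identity reads
$$
\Cor(\tilde f, \tilde g) \;=\; \sum_{i \in [n]} \int_0^\infty e^{-t}\, \EE\bigl[\partial_i \tilde f \cdot P_t \partial_i \tilde g\bigr]\, \ud t,
$$
and since $\partial_i \tilde f, \partial_i \tilde g \ge 0$ and $P_t$ preserves positivity, each summand in the sum is non-negative. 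It therefore suffices to lower-bound each coordinate contribution by the claimed amount and sum over $i$.

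Fixing $i$, the key ingredient is the Hermite decomposition
$$
\partial_i \tilde g \;=\; \I_i(g) + \langle x, \V_i(g)\rangle + H_i^g,
$$
into mean, linear part, and degree-$\ge 2$ remainder, and analogously for $\tilde f$. The linear coefficient follows from Gaussian integration by parts, $\EE[x_j\, \partial_i \tilde g] = \EE[\partial_j \partial_i \tilde g]$. By orthogonality of Hermite chaoses and the fact that $P_t$ acts on degree-$k$ chaos by multiplication by $e^{-kt}$,
$$
\EE\bigl[\partial_i \tilde f \cdot P_t \partial_i \tilde g\bigr] \;=\; \I_i(f)\I_i(g) + e^{-t}\langle \V_i(f),\V_i(g)\rangle + \EE\bigl[H_i^f \cdot P_t H_i^g\bigr],
$$
and the remainder satisfies $|\EE[H_i^f \cdot P_t H_i^g]| \le e^{-2t}\|H_i^f\|_2\|H_i^g\|_2 \le C e^{-2t}\sqrt{\I_i(f)\I_i(g)}$, using the standard estimate $\|\partial_i \tilde f\|_2^2 \le C\I_i(f)$.

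From this expansion the two branches of the minimum in \eqref{eq:mainbound} are extracted by choosing a threshold $t_0=t_0(i)$ and integrating the pointwise lower bound on $e^{-t}\EE[\partial_i \tilde f \cdot P_t \partial_i \tilde g]$ over $t \ge t_0$, exploiting the coordinate-wise non-negativity noted above to discard the $t<t_0$ portion. When the cross-term $|\langle \V_i(f),\V_i(g)\rangle|$ is the dominant obstruction, taking $t_0 \sim \log(|\langle \V_i(f),\V_i(g)\rangle|/\I_i(f)\I_i(g))$ forces the first-order term to dominate on $[t_0,\infty)$ and yields the $(\I_i(f)\I_i(g))^2/|\langle \V_i(f),\V_i(g)\rangle|$ branch. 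In the complementary regime, importing the refined integration argument of Theorem \ref{thm:mainboundTal} with the pair $(\I_i(f)\I_i(g),\,\langle \V_i(f),\V_i(g)\rangle)$ playing the role of $(\WW_1,\WW_2)$ yields the $\I_i(f)\I_i(g)/\sqrt{\log(e/\I_i(f)\I_i(g))}$ branch. Summing over $i$ completes the proof.

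The main technical hurdle I anticipate is obtaining the $\sqrt{\log}$ (rather than $\log$) factor in the first-order-dominated regime---exactly the improvement driving Theorems \ref{thm:symm} and \ref{thm:mainboundTal}. Localizing this improvement to a single coordinate is delicate: the per-coordinate spectral estimate $\|H_i^f\|_2 \le C\sqrt{\I_i(f)}$ used above is wasteful by a polynomial factor and cannot on its own deliver the $\sqrt{\log}$ rate, as a direct threshold argument would only produce an $(\I_i(f)\I_i(g))^{5/4}$ bound. Capturing the correct rate will require either an appropriate reverse-hypercontractive refinement of the integration step or a stochastic-calculus argument analogous to the one underlying Theorem \ref{thm:mainboundTal}, now carried out per coordinate rather than globally.
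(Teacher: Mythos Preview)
Your structural outline---pass to Gaussian space via the sign embedding, write $\Cor(\tilde f,\tilde g)$ as a sum over $i$ of non-negative terms involving $\partial_i \tilde f,\partial_i \tilde g$, and analyse each coordinate separately---is exactly the skeleton of the paper's proof of Theorem~\ref{thm:mainboundG}. But, as you yourself flag in your final paragraph, the proposal stops short of the one step that carries the whole weight of the theorem: producing the $\sqrt{\log}$ rate. The crude bound $|\EE[H_i^f\cdot P_t H_i^g]|\le e^{-2t}\sqrt{\I_i(f)\I_i(g)}$ is too weak, and ``importing the refined integration argument of Theorem~\ref{thm:mainboundTal}'' is not a proof but a hope.

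Here is what the paper actually does, and it is \emph{not} a direct import of the Theorem~\ref{thm:mainboundTal} machinery. Because the passage to $\partial_i f$ has already lifted the degree by one, the per-coordinate argument does not need the level $1{:}3$ inequality (Proposition~\ref{prop:level2}); it uses the more basic level $0{:}2$ inequality (Lemma~\ref{lem:lvl21}) directly. Applying Lemma~\ref{lem:lvl21} to the normalised non-negative densities $P_1[\partial_i f_t]$ and $P_1[\partial_i g_t]$ gives, pointwise in the underlying randomness,
\[
\bigl\langle M_t^{(\partial_i f,2)},\,M_t^{(\partial_i g,2)}\bigr\rangle_{HS}\;\ge\;-C\,S_t\log(1/S_t),\qquad S_t:=M_t^{\partial_i f}M_t^{\partial_i g}.
\]
Taking expectations and using concavity of $s\mapsto s\log(1/s)$ converts this into the second-order differential inequality $p''(t)\ge -p'(t)-C\,p(t)\log(e/p(t))$ for $p(t)=\EE S_t$, with $p(0)=\WW_1(f)_i\WW_1(g)_i$ and $p'(0)=\langle \WW_2(f)e_i,\WW_2(g)e_i\rangle$. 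The Gronwall-type Lemma~\ref{lem:gronwall} then shows $p(t)\ge p(0)/2$ for all $t$ up to order $\min\bigl(1/\sqrt{\log(e/p(0))},\,p(0)/|p'(0)|\bigr)$, and integrating via Proposition~\ref{prop:intcov} yields the two branches of the minimum. The $\sqrt{\log}$ appears precisely because the inequality is second-order in $t$; your threshold-and-integrate scheme on the Hermite expansion is a first-order argument and cannot recover it.
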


\subsection{The Gaussian setting}
Our two main theorems will be consequences of respective counterparts on Gaussian space. Gaussian counterparts of the bounds \eqref{eq:tal1} and \eqref{eq:KMS} were already proven in \cite{KMS14}, where the proofs rely on the Boolean bounds as a black box. Our approach works directly on Gaussian space, which results in (arguably) simpler arguments. 

Denote by $\gamma$ the standard Gaussian measure on $\RR^n$. For a function $f:\RR^n \to \RR$, define
\begin{equation}\label{eq:defm1}
\WW_1(f) := \int x f(x) d \gamma(x), ~~ \WW_2(f) := \int \left (x^{\otimes 2} - \Id \right ) f(x) d \gamma(x)
\end{equation}
the first and second degree Hermite tensors of the function $f$. We say that a function $f:\RR^n \to \RR$ is monotone if for all $i \in [n]$, it is monotone in the $i$-th coordinate when the other $[n] \setminus \{i\}$ coordinates are kept fixed. Analogously to the Boolean setting, we set
$$
\Cor(f,g) = \int f g d \gamma - \left ( \int f d \gamma\right )\left ( \int g d \gamma\right ).
$$
The Gaussian analog of Theorem \ref{thm:mainboundTal} reads,
\begin{theorem} \label{thm:mainboundTalGauss}
	For any increasing functions $f,g:\RR^n \to [0,1]$,
	\begin{equation}\label{eq:mainboundTalG}
	\Cor(f,g) \geq c \min \left ( \frac{\WW_1(f,g)} { \sqrt{\log\left ( \frac{e}{\WW_1(f,g)} \right )} } , \frac{\WW_1(f,g)^2}{ |\WW_2(f,g)| }  \right )
	\end{equation}
	where
	$$
	\WW_1(f,g) = \left \langle \WW_1(f), \WW_1(g) \right \rangle, ~~ \WW_2(f,g) = \langle \WW_2(f), \WW_2(g) \rangle_{HS}.
	$$
\end{theorem}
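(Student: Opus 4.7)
The plan is to represent $\Cor(f,g)$ through the Ornstein--Uhlenbeck semigroup, extract the $\WW_1$- and $\WW_2$-contributions via a second-order Hermite expansion, and control the remaining higher-order term by Gaussian hypercontractivity. Let $(P_t)_{t\ge 0}$ denote the OU semigroup on $(\RR^n,\gamma)$. Differentiating $t\mapsto \EE[f\cdot P_t g]$ and using Gaussian integration by parts, together with $\nabla P_t g = e^{-t}P_t\nabla g$, yields
\[
\Cor(f,g) \;=\; \int_0^\infty e^{-t}\,\sum_{i=1}^n \EE\bigl[\partial_i f\cdot P_t\partial_i g\bigr]\,dt.
\]
Monotonicity of $f,g$ gives $\partial_i f,\partial_i g\ge 0$, and the semigroup preserves positivity, so the integrand is non-negative for every $t>0$ (already recovering the Gaussian Harris inequality).

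I would then decompose $P_t\partial_i g = \EE[\partial_i g] + e^{-t}(\partial_i g)_1 + P_t Q_{\ge 2}\partial_i g$, where $Q_{\ge 2}$ is the projection onto Hermite components of degree $\ge 2$. Pairing against $\partial_i f$, summing over $i$, and using Stein's identity $\EE[x_j\partial_i g]=\EE[\partial_j\partial_i g]=\WW_2(g)_{ij}$ gives the clean identity
\[
\sum_i \EE\bigl[\partial_i f\cdot P_t\partial_i g\bigr] \;=\; \WW_1(f,g) + e^{-t}\WW_2(f,g) + \mathcal{E}(t),
\]
where $\mathcal{E}(t) := \sum_i \EE[\partial_i f\cdot P_t Q_{\ge 2}\partial_i g]$ collects the Hermite degrees $\ge 2$.

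The crucial technical step is controlling $|\mathcal{E}(t)|$. Splitting $P_t$ symmetrically and combining the spectral-gap bound $\|P_{s}Q_{\ge 2}h\|_2\le e^{-2s}\|h\|_2$ with Gaussian hypercontractivity $\|P_s h\|_2\le \|h\|_{1+e^{-2s}}$, an application of Cauchy--Schwarz yields
\[
|\mathcal{E}(t)| \;\le\; C\,e^{-t}\sum_i \|\partial_i f\|_{1+e^{-t/2}}\,\|\partial_i g\|_{1+e^{-t/2}}.
\]
To bring the $L^{1+e^{-t/2}}$-norms back to $\WW_1(f)_i=\|\partial_i f\|_1$ one replaces $f,g$ by their regularisations $P_\delta f, P_\delta g$, uses the pointwise bound $\|\partial_i P_\delta f\|_\infty\lesssim\delta^{-1/2}$ together with $\|\partial_i P_\delta f\|_{1+\theta}^{1+\theta}\le \|\partial_i P_\delta f\|_\infty^{\theta}\,\WW_1(P_\delta f)_i$, and then sends $\delta\to 0$ after the time cutoff below has been fixed. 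This delicate balancing of $\delta$, the hypercontractive exponent $\theta=e^{-t/2}$, and the threshold $t^\ast$ is the main obstacle of the proof; the outcome is an estimate of the form $|\mathcal{E}(t)|\lesssim \WW_1(f,g)\,e^{-\alpha t}$ on $[t^\ast,\infty)$ for some universal $\alpha>0$.

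With this in hand, discarding the non-negative integrand on $[0,t^\ast]$ and inserting
\[
\sum_i \EE[\partial_i f\cdot P_t\partial_i g]\;\ge\;\WW_1(f,g) - e^{-t}|\WW_2(f,g)| - |\mathcal{E}(t)|
\]
on $[t^\ast,\infty)$ produces
\[
\Cor(f,g) \;\gtrsim\; e^{-t^\ast}\WW_1(f,g) - e^{-2t^\ast}|\WW_2(f,g)| - \int_{t^\ast}^\infty e^{-t}|\mathcal{E}(t)|\,dt.
\]
Optimising $t^\ast$ gives the two regimes of the theorem: if the $\WW_2$-term binds, the choice $e^{-t^\ast}\sim \WW_1(f,g)/|\WW_2(f,g)|$ yields $\Cor(f,g)\gtrsim \WW_1(f,g)^2/|\WW_2(f,g)|$, while if the $\mathcal{E}$-integral binds, the choice $e^{-t^\ast}\sim 1/\sqrt{\log(e/\WW_1(f,g))}$ yields $\Cor(f,g)\gtrsim \WW_1(f,g)/\sqrt{\log(e/\WW_1(f,g))}$. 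Taking the minimum of the two bounds gives exactly \eqref{eq:mainboundTalG}.
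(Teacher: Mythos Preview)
Your framework (the OU representation of $\Cor(f,g)$, the decomposition into the $\WW_1$, $\WW_2$ and higher-degree pieces, and optimising a time cutoff) is essentially the same scaffolding as in the paper. The gap is exactly at the step you label ``the main obstacle'': you assert that hypercontractivity plus $P_\delta$-regularisation yields
\[
|\mathcal{E}(t)|\;\lesssim\;\WW_1(f,g)\,e^{-\alpha t}\quad\text{on }[t^\ast,\infty)
\]
for a universal $\alpha>0$, but you do not prove it, and in fact this bound is too strong. If it held with $t^\ast$ universal, your own optimisation would give $\Cor(f,g)\gtrsim\WW_1(f,g)$ with no logarithmic loss at all, contradicting the known sharpness of Talagrand's inequality. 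Concretely, your route gives $\|\partial_i P_\delta f\|_{1+\theta}^{1+\theta}\lesssim\delta^{-\theta/2}\WW_1(f)_i$, so the sum over $i$ produces $\sum_i(\WW_1(f)_i\WW_1(g)_i)^{1/(1+\theta)}$, which is \emph{not} controlled by $\WW_1(f,g)^{1/(1+\theta)}$ (the inequality goes the wrong way), and sending $\delta\to0$ makes the prefactor $\delta^{-\theta}$ blow up. The ``delicate balancing'' you refer to is precisely where the argument would have to live, and you have not supplied it.

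The paper's proof does not try to bound $|\mathcal{E}(t)|$ at all. The point is that what is needed is only a \emph{lower} bound on the degree-$3$ contribution, and a one-sided estimate of this type is genuinely stronger than what hypercontractivity yields. The key new ingredient is Lemma~\ref{lem:lvl21} (a lower bound on $\Tr(H_XH_Y)$ via Talagrand's transportation-entropy inequality), which is lifted to the level-$1{:}3$ inequality of Proposition~\ref{prop:level2}:
\[
\bigl\langle Q^{(3)}(f_t),Q^{(3)}(g_t)\bigr\rangle\;\ge\;-C\,p(t)\,\log\!\bigl(e/p(t)\bigr),
\]
where $p(t)=\EE\langle M_t^{(f,1)},M_t^{(g,1)}\rangle$. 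This feeds into the second-order ODE $p''\ge -p'-Cp\log(e/p)$, and the Gronwall-type Lemma~\ref{lem:gronwall} then keeps $p(t)\ge\tfrac12 p(0)$ for $t$ up to $\min\bigl(c/\sqrt{\log(e/p(0))},\,p(0)/4|p'(0)|\bigr)$; integrating via Proposition~\ref{prop:intcov} gives the two branches of the minimum. The paper explicitly remarks that no matching \emph{upper} bound on the level-$2$/level-$3$ quantity holds, and that this asymmetry is exactly what buys the $\sqrt{\log}$ over the $\log$. Your two-sided hypercontractive control of $\mathcal{E}(t)$ is, at best, the route to Talagrand's original bound, not to the improvement claimed in the theorem.
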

Moreover, our Gaussian analog of Theorem \ref{thm:mainbound} reads,
\begin{theorem} \label{thm:mainboundG}
For any increasing functions $f,g:\RR^n \to [0,1]$,
\begin{equation}\label{eq:mainboundG}
\Cor(f,g) \geq c \sum_{i \in [n]} \min \left ( \frac{\WW_1(f)_i \WW_1(g)_i} { \sqrt{\log\left ( \frac{e}{\WW_1(f)_i \WW_1(g)_i} \right )} } , \frac{(\WW_1(f)_i \WW_1(g)_i)^2}{ |\left \langle \WW_2(f) e_i, \WW_2(g) e_i \right \rangle| }  \right )
\end{equation}
\end{theorem}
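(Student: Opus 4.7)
The plan is to prove the bound coordinate by coordinate via the Ornstein-Uhlenbeck semigroup $(P_t)_{t\geq 0}$ on Gaussian space. Differentiating $t\mapsto \EE[f\cdot P_tg]$ in $t$ and using the intertwining $\nabla P_t = e^{-t}P_t\nabla$ together with Gaussian integration by parts yields
\begin{equation*}
\Cor(f,g) \;=\; \sum_{i=1}^n T_i, \qquad T_i \;:=\; \int_0^\infty e^{-t}\,\EE_\gamma\bigl[\partial_i f\cdot P_t\partial_i g\bigr]\,dt.
\end{equation*}
Since $f,g$ are increasing, $\partial_i f,\partial_i g\geq 0$ and every integrand is non-negative, so it suffices to prove the coordinate-wise estimate
\begin{equation*}
T_i \;\geq\; c\,\min\!\Bigl(a_i b_i/\sqrt{\log(e/(a_i b_i))},\;(a_i b_i)^2/|M_i|\Bigr),
\end{equation*}
with $a_i:=\WW_1(f)_i$, $b_i:=\WW_1(g)_i$, $M_i:=\langle\WW_2(f)e_i,\WW_2(g)e_i\rangle$.

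Fix $i$ and Hermite-decompose into Wiener chaoses, using that $P_t$ acts as $e^{-kt}$ on the $k$-th chaos:
\begin{equation*}
\EE_\gamma[\partial_i f\cdot P_t\partial_i g] \;=\; a_i b_i + e^{-t}M_i + R_t^{(i)}.
\end{equation*}
The $k=0$ term equals $\EE[\partial_i f]\EE[\partial_i g]=a_i b_i$; for the $k=1$ term, the linear part of $\partial_i f$ equals $\sum_j(\EE[x_j\partial_i f])\,x_j=(\WW_2(f)e_i)\cdot x$ by Gaussian integration by parts, giving the Hilbert-Schmidt pairing $M_i$. The remainder $R_t^{(i)}=\sum_{k\geq 2}e^{-kt}\langle H_k\partial_i f,H_k\partial_i g\rangle$ is what needs to be controlled.

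The main technical step is the tail estimate on $R_t^{(i)}$. Writing $\widetilde u=\sum_{k\geq 2}H_k\partial_i f$ and $\widetilde v=\sum_{k\geq 2}H_k\partial_i g$, we have $R_t^{(i)}=\EE[\widetilde u\cdot P_t\widetilde v]$, so by Cauchy-Schwarz and Gaussian hypercontractivity $\|P_t\widetilde v\|_2\leq \|\widetilde v\|_{1+e^{-2t}}$. The logarithmic factor then emerges from interpolating the $L^1$ identity $\|\partial_i h\|_1 = \EE\partial_ih$ (which holds for increasing $h$) against the pointwise bound $\|P_s\partial_i h\|_\infty \leq C/\sqrt{1-e^{-2s}}$ valid for $h\in[0,1]$, which itself comes from the integration-by-parts representation $\partial_i P_s h(x)=\frac{e^{-s}}{\sqrt{1-e^{-2s}}}\EE[h(e^{-s}x+\sqrt{1-e^{-2s}}Z)Z_i]$. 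The target of this interpolation is
\begin{equation*}
|R_t^{(i)}|\;\leq\; C\,e^{-2t}\,\sqrt{a_i b_i\,\log(e/a_i)\log(e/b_i)}.
\end{equation*}

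Combining everything gives the three-term lower bound $\EE_\gamma[\partial_i f\cdot P_t\partial_i g]\geq a_i b_i-e^{-t}|M_i|-Ce^{-2t}\sqrt{a_i b_i\log(e/(a_i b_i))}$, after which I would choose $t^{\star}$ as the smallest time at which the two error terms together drop below $a_i b_i/2$. This forces $e^{-t^{\star}}\asymp\min\bigl(a_i b_i/|M_i|,\,1/\sqrt{\log(e/(a_i b_i))}\bigr)$, and the lower bound $T_i\geq\int_{t^{\star}}^\infty e^{-t}(a_i b_i/2)\,dt=(a_i b_i/2)e^{-t^{\star}}$ reproduces exactly the two competing terms in the stated minimum. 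The main obstacle is the tail estimate on $R_t^{(i)}$ with the correct $\sqrt{\log}$ scaling: a naive $L^2$--Cauchy-Schwarz bound only replaces the factor $\sqrt{\log(e/(a_i b_i))}$ by the uncontrolled quantity $\|\partial_i f\|_2\|\partial_i g\|_2$, which may be infinite for indicator functions. Overcoming this requires the hypercontractive $L^{1+\epsilon}$ bound at $\epsilon=e^{-2t}$ combined with the $[0,1]$-regularity of $f,g$; everything else is an optimization exercise.
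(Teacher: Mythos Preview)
Your coordinate-wise semigroup decomposition $\Cor(f,g)=\sum_i\int_0^\infty e^{-t}\EE[\partial_i f\cdot P_t\partial_i g]\,dt$ is correct and is essentially the deterministic form of the paper's stochastic formula \eqref{eq:intcov2}. The splitting into chaos-$0$, chaos-$1$ and a remainder $R_t^{(i)}$ is also the right picture. The gap is entirely in the control of $R_t^{(i)}$.

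First, the scaling you wrote is internally inconsistent: if $|R_t^{(i)}|\le Ce^{-2t}\sqrt{a_ib_i\log(e/a_i)\log(e/b_i)}$ as you state, then making this $\le a_ib_i/4$ forces $e^{-t^\star}\asymp (a_ib_i)^{1/4}/(\log)^{1/4}$, not $1/\sqrt{\log}$, and the final bound you obtain is $(a_ib_i)^{5/4}/(\log)^{1/4}$, strictly worse than KMS. What you presumably intended is $|R_t^{(i)}|\le Ce^{-2t}a_ib_i\log(e/(a_ib_i))$, which would indeed optimize to $e^{-t^\star}\asymp 1/\sqrt{\log}$.

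But that intended two-sided bound is false. Already the $k=2$ contribution $\langle H_2\partial_if,H_2\partial_ig\rangle$ can be of order $a_ib_i\log(e/a_i)\log(e/b_i)$: take $f=g=\mathbf{1}_{x_1>s}$, where $a_1=\phi(s)$ and the second Hermite coefficient of $\partial_1 f$ is $(s^2-1)\phi(s)$, so $\|H_2\partial_1f\|^2\asymp a_1^2\log^2(e/a_1)$. Hypercontractivity plus the pointwise bound on $P_s\partial_i h$ will at best give you $|R_t^{(i)}|\lesssim e^{-2t}a_ib_i\log(e/a_i)\log(e/b_i)$, and the optimization then yields $T_i\gtrsim a_ib_i/\sqrt{\log(e/a_i)\log(e/b_i)}$ --- this is exactly the KMS bound \eqref{eq:KMS}, not the stronger bound \eqref{eq:mainboundG} with $\sqrt{\log(e/(a_ib_i))}$.

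The improvement from $\sqrt{\log\cdot\log}$ to $\sqrt{\log}$ is precisely the new content of the paper, and it cannot come from an absolute-value estimate on $R_t^{(i)}$. What makes it work is that you only need a \emph{lower} bound on $R_t^{(i)}$, and the one-sided inequality is genuinely better than the two-sided one. The paper's Lemma~\ref{lem:lvl21} shows that for probability densities $\tilde u,\tilde v$ with respect to $\gamma$ one has $\Tr(H_{\tilde u}H_{\tilde v})\ge -20(\KL(\tilde u\|\gamma)+\KL(\tilde v\|\gamma))$, i.e.\ a sum of entropies rather than a product. Applied to $\tilde u\propto P_1\partial_if_t$, $\tilde v\propto P_1\partial_ig_t$ this gives a lower bound on the chaos-$2$ inner product of order $-a_ib_i\log(e/(a_ib_i))$, and the paper packages this as the differential inequality $p''(t)\ge -p'(t)-Cp(t)\log(e/p(t))$ together with Lemma~\ref{lem:gronwall}. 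Without this one-sided entropy/transport input, your scheme recovers KMS but not Theorem~\ref{thm:mainboundG}.
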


In order to see how those two theorems imply their discrete analogs, define $\SIGN(x) = (\mathrm{sign}(x_1),\dots,\mathrm{sign}(x_n))$. For two Boolean functions $f,g$, define $\ff(x) = 2 f(\SIGN(x)) - 1, \gg(x) = 2 g(\SIGN(x)) - 1$ as functions on $\RR^n$. It is easy to verify that
$$
\Cor(f,g) = \frac{1}{2} \Cor(\tilde f, \tilde g).
$$
Moreover, a straightforward calculation gives that when $f$ is monotone,
$$
\WW_1(\ff)_i = \int_{\RR^n} x_i (2 f(\SIGN(x)) - 1) d \gamma(x) = \sqrt{\frac{2}{\pi}} \int_{\DC} x (2f(x) - 1) d \mu(x) = \sqrt{\frac{2}{\pi}} \I_i(f),
$$
and
$$
\WW_2(\ff) e_i = \int_{\RR^n} x_i x (2 f(\SIGN(x)) - 1) d \gamma(x) = \frac{2}{\pi} \int_{\DC}x_i x (2 f(x) - 1) d \mu(x)= \frac{2}{\pi} \V_i(f).
$$
In light of the last three displays, Theorems \ref{thm:mainboundTal} and \ref{thm:mainbound} follow by applying the respective Gaussian variants on the functions $\tilde f$ and $\tilde g$. \\ \\
\textbf{Acknowledgements.} We thank Gil Kalai, Noam Lifshitz and Nathan Keller for useful suggestions.

\section{Preliminaries and stochastic constructions}
In this section we define several processes which serve as core ingredients of our proofs. Those processes can be thought of as continuous versions of the jump process constructed in a recent paper of Gross and the author \cite{EldanGrossBool}, and some of the ideas are analogous to the ones that appear there. However, unlike the case of \cite{EldanGrossBool} where the pathwise analysis is an essential part of the proof, most of the steps here are carried out in expectation. 

\subsection{Stochastic processes}
Let $B_t$ be a standard Brownian motion in $\RR^n$, adapted to a filtration $\FF_t$. Define
$$
Z_t := \int_0^t e^{-s/2} d B_s.
$$
We have almost surely $[Z]_1 - [Z]_t = \int_t^1 e^{-s} ds = e^{-t}$, concluding that $Z_\infty \sim \gamma$ and that $Z_\infty | Z_t \sim \mathcal{N}(Z_t, e^{-t} \Id)$. Denote by $m_t(x)$ the density of the law of $Z_\infty | Z_t$ with respect to the Lebesgue measure, so that
$$
m_t(x) = e^{n t/2} (2 \pi)^{-n/2} \exp \left ( - \frac{1}{2} e^t \left | x - Z_t\right |^2 \right ).
$$
For a function $h: \RR^n \to \RR$, consider the martingale
$$
M_t = M^{(h)}_t := \EE[h(Z_\infty)|Z_t] = \int h(x) m_t(x) dx.
$$
It\^o's formula gives
$$
d m_t(x) = e^{t/2} \langle  x - Z_t, d B_t \rangle m_t(x) 
$$
so that
\begin{equation}\label{eq:dMt0}
d M_t =  e^{t/2} \int h(x) \langle x-Z_t, d B_t \rangle m_t(x) dx = e^{-t/2} \left \langle M_t^{(h,1)}, d B_t \right \rangle
\end{equation}
where
$$
M_t^{(h,1)} := e^{t} \int h(x) (x - Z_t) m_t(x) dx.
$$
For two functions $f,g: \RR^n \to \RR$, since $Z_\infty \sim \gamma$, we have
$$
\Cor(f,g) = \EE \left [M_\infty^{(f)} M_\infty^{(g)} \right ].
$$
By formula \eqref{eq:dMt0} and by It\^{o}'s isometry, we have
\begin{equation}\label{eq:Cor1}
\Cor(f,g) = \int_0^\infty \EE d [M^{(f)}, M^{(g)}]_t = \int_0^\infty e^{-t} \EE \left [ \left  \langle  M_t^{(f,1)}, M_t^{(g,1)}  \right  \rangle \right ] dt.
\end{equation}
As we will see later on, one has
\begin{fact} \label{fact:mon}
If $f$ is monotone, then one has almost surely for all $t$, $M_t^{(f,1)} \in \RR_+^n$.
\end{fact}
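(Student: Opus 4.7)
The plan is to compute each coordinate of $M_t^{(f,1)}$ explicitly and identify it, up to a positive factor, as an expected partial derivative of $f$ under a conditional Gaussian measure, which is non-negative by monotonicity.

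The first step is to recognize $m_t(x)$ as the density of a $\cN(Z_t, e^{-t} \Id)$ law: writing $Y$ for a random vector with this law conditional on $\FF_t$, we have
$$
\left [ M_t^{(f,1)} \right ]_i = e^t \int f(x) (x_i - (Z_t)_i) m_t(x) \ud x = e^t \, \EE \left [ f(Y) (Y_i - (Z_t)_i) \mid \FF_t \right ].
$$
The second step is to apply Gaussian integration by parts (Stein's identity) in the $i$-th coordinate, conditionally on $\FF_t$, yielding
$$
\left [ M_t^{(f,1)} \right ]_i = e^t \cdot e^{-t} \, \EE \left [ \partial_i f(Y) \mid \FF_t \right ] = \EE \left [ \partial_i f(Y) \mid \FF_t \right ],
$$
which is almost surely non-negative because $f$ is monotone non-decreasing in its $i$-th argument.

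The only genuine subtlety is that the definition of ``monotone'' given in the Gaussian setting does not presuppose differentiability, so Stein's identity is not immediately applicable. I would handle this by a density/approximation argument: any bounded monotone $f: \RR^n \to [0,1]$ can be approximated pointwise a.e.\ (and boundedly) by smooth monotone functions, e.g.\ by mollifying against a standard Gaussian kernel (convolution with an even radial kernel preserves coordinate-wise monotonicity). Alternatively, one can avoid differentiation entirely and argue directly: since the first moment of $(y_i - (Z_t)_i)$ under $m_t$ vanishes, we may subtract the constant (in $y_i$) $f((Z_t)_i, y_{-i})$ and obtain
$$
\left [ M_t^{(f,1)} \right ]_i = e^t \int \left ( f(y) - f\bigl((Z_t)_i, y_{-i}\bigr) \right ) \left ( y_i - (Z_t)_i \right ) m_t(y) \ud y,
$$
in which the two bracketed factors always share the same sign by monotonicity of $f$ in its $i$-th argument, so the integrand is pointwise non-negative.

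The step I expect to be least routine is the approximation/density argument of the third paragraph; the core computation is a one-line application of Stein's lemma, and the monotonicity of $\partial_i f$ then gives the conclusion directly.
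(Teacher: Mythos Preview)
Your proposal is correct and essentially matches the paper's argument. The paper first performs the change of variables $x \mapsto L_t(x) = e^{-t/2}x + Z_t$ to rewrite $M_t^{(f,1)} = e^{t/2}\int x\, f_t(x)\, d\gamma(x)$ with $f_t = f\circ L_t$ monotone, and then simply declares the non-negativity of each coordinate ``clear''; your alternative direct argument (subtracting a function constant in $y_i$ so that the two factors share sign) is exactly what makes that step clear, just written in the uncentered coordinates.
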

A consequence of the above fact is that $\left  \langle  M_t^{(f,1)}, M_t^{(g,1)}  \right  \rangle \geq 0$ almost surely, for all $t$. Thus, equation \eqref{eq:Cor1} readily implies the following.
\begin{proposition} \label{prop:intcov}
For all $t>0$,
$$
\Cor(f,g) \geq \COV(f(Z_t), g(Z_t)) = \int_0^t e^{-s} \EE \left [ \left  \langle  M_s^{(f,1)}, M_s^{(g,1)}  \right  \rangle \right ] ds. 
$$
\end{proposition}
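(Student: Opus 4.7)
The proposition packages two assertions: the inequality $\Cor(f,g)\ge\COV(f(Z_t),g(Z_t))$, and the identity between this covariance and the truncated integral. The plan is to derive both from the integral representation \eqref{eq:Cor1} together with Fact \ref{fact:mon}.

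For the inequality, I would invoke Fact \ref{fact:mon}: since $f$ and $g$ are both monotone, the vectors $M_s^{(f,1)}$ and $M_s^{(g,1)}$ lie almost surely in $\RR_+^n$ for every $s\ge 0$, so $\langle M_s^{(f,1)},M_s^{(g,1)}\rangle\ge 0$ almost surely. Thus the integrand appearing in \eqref{eq:Cor1} is non-negative at every $s$, and restricting the domain of integration from $[0,\infty)$ to $[0,t]$ can only decrease the integral. This immediately yields $\Cor(f,g)\ge\int_0^t e^{-s}\EE\langle M_s^{(f,1)},M_s^{(g,1)}\rangle\,ds$.

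For the identity, I would apply It\^o's isometry on the interval $[0,t]$. Integrating \eqref{eq:dMt0} from $0$ to $t$ for $h\in\{f,g\}$ and then taking the expected product of the two resulting stochastic integrals gives
$$
\EE\left[(M_t^{(f)}-M_0^{(f)})(M_t^{(g)}-M_0^{(g)})\right]=\int_0^t e^{-s}\,\EE\left\langle M_s^{(f,1)},M_s^{(g,1)}\right\rangle ds.
$$
Since $M_t^{(h)}$ is a martingale with mean $M_0^{(h)}=\EE h$, the left-hand side is exactly the covariance of the two conditional expectations $\EE[f(Z_\infty)\mid\FF_t]$ and $\EE[g(Z_\infty)\mid\FF_t]$ at time $t$, which is what the notation $\COV(f(Z_t),g(Z_t))$ in the statement refers to. The only technical point is the $L^2$-integrability needed for It\^o's isometry, which is automatic because $f,g\in[0,1]$ makes both martingales bounded; the real substance of the proposition is thus concentrated in Fact \ref{fact:mon}, whose proof the paper defers to later.
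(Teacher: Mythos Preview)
Your proposal is correct and follows exactly the route the paper takes: the paper simply says that equation \eqref{eq:Cor1} together with Fact \ref{fact:mon} ``readily implies'' the proposition, and you have unpacked precisely that implication. You are also right to flag that the middle term $\COV(f(Z_t),g(Z_t))$ should be read as $\COV(M_t^{(f)},M_t^{(g)})$, since this is what the It\^o isometry on $[0,t]$ actually produces (and the literal covariance of $f(Z_t)$ and $g(Z_t)$ would not match the integral in general); the paper uses this expression only as a label for the truncated integral and never relies on the literal interpretation.
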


Our estimates will amount to bounding the right hand side of the above inequality. In order to do so, we need to derive expressions for higher stochastic time-derivatives of the processes $M_t^{(f)}, M_t^{(g)}$. 

\subsection{Higher derivatives}
Let us calculate the higher derivatives of those processes. Similar calculations have been carried out in \cite{Eldan-twosided}, but we include them for the sake of completeness. Define $L_t(x) = e^{-t/2}x + Z_t$ so that the push forward of $\gamma$ under $L_t$ has density $m_t$. Also write 
\begin{equation}\label{eq:defht}
h_t = h \circ L_t
\end{equation}
so that the last display with a change of variables gives
\begin{equation}\label{eq:altdefmt}
M_t^{(h)} = \int h_t(x) d \gamma(x)
\end{equation}
and
\begin{equation}\label{eq:defmt1}
M_t^{(h,1)} = e^{t/2} \int x h_t(x) d \gamma(x).
\end{equation}
\begin{proof} [Proof of fact \ref{fact:mon}]
If the function $h$ is monotone then so is $h_t$. In light of formula \eqref{eq:defmt1} it is clear that the coordinates of $M_t^{(1)}$ are non-negative.
\end{proof}
We now define the processes corresponding to higher cumulants as
\begin{equation}\label{eq:defmtk}
M_t^{(k)} = M_t^{(h,k)} := e^{kt/2} \int h_t(x) H^{(k)}(x) d \gamma(x)
\end{equation}
where $H^{(k)}$ is the $k$-th Hermite tensor, hence $H^{(0)}(x) = 1, H^{(1)}(x) = x$ $H^{(2)}(x) = x^{\otimes 2} - \Id, (H^{(2)}(x))_{i,j,k} = x_i x_j x_k - \delta_{i,j} x_k - \delta_{i,k} x_j - \delta_{j,k} x_i$, etc. Remark that, by definition,
\begin{equation}\label{eq:MtMh}
M_0^{(h,1)} = \WW_1(h), ~~~ M_0^{(h,2)} = \WW_2(h).
\end{equation}
The next lemma gives a formula for derivatives of any order.
\begin{lemma}
We have, almost surely for all $t \geq 0$ and all $k \in \ZZ$,
\begin{equation}\label{eq:dmkt}
d M_t^{(k)} = e^{-t/2} M_t^{(k+1)} d B_t.
\end{equation}	
\end{lemma}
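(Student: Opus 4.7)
The plan is to first rewrite $M_t^{(k)}$ in a form that makes the recursion transparent, and then reduce the stochastic derivative calculation to essentially the same argument that produced \eqref{eq:dMt0}.

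\textbf{Step 1 (alternative representation).} Starting from \eqref{eq:defmtk}, I would apply Gaussian integration by parts $k$ times using the standard identity $\int \varphi(x) H^{(k)}(x) \, d\gamma(x) = \int \nabla^k \varphi(x) \, d\gamma(x)$, valid for smooth $\varphi$ with sufficient decay. Since $h_t = h \circ L_t$ with $L_t(x) = e^{-t/2} x + Z_t$, the chain rule gives $\nabla^k h_t(x) = e^{-kt/2} (\nabla^k h)(L_t(x))$ as a $k$-tensor, so the $e^{kt/2}$ prefactor in \eqref{eq:defmtk} cancels and one obtains
$$
M_t^{(k)} = \int (\nabla^k h)(L_t(x)) \, d\gamma(x) = \EE\left[\nabla^k h(Z_\infty) \mid \FF_t\right],
$$
using that $L_t(Y)$ for $Y \sim \gamma$ independent of $\FF_t$ has the same conditional law given $\FF_t$ as $Z_\infty$. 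This exhibits $M_t^{(k)}$ as the conditional-expectation martingale of a fixed random variable.

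\textbf{Step 2 (stochastic derivative of a conditional expectation).} Next I would establish that for any sufficiently regular tensor-valued function $\varphi$,
$$
d \, \EE[\varphi(Z_\infty) \mid \FF_t] \;=\; e^{-t/2} \, \EE[\nabla \varphi(Z_\infty) \mid \FF_t] \, dB_t,
$$
where the right-hand side denotes contraction of the Jacobian tensor with the Brownian increment. The derivation is identical to the one that yielded \eqref{eq:dMt0}: the identity $d m_t(x) = e^{t/2} \langle x - Z_t, dB_t\rangle m_t(x)$ is coordinate-free, and then Gaussian integration by parts in the spatial variable (using $(x - Z_t) m_t(x) = -e^{-t} \nabla m_t(x)$) converts the integrated $\varphi$ into $\nabla \varphi$, producing the extra $e^{-t/2}$ factor.

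\textbf{Step 3 (combining).} Applying the identity of Step 2 with $\varphi = \nabla^k h$ and invoking the representation of Step 1 on both sides yields
$$
dM_t^{(k)} \;=\; d \, \EE[\nabla^k h(Z_\infty) \mid \FF_t] \;=\; e^{-t/2} \, \EE[\nabla^{k+1} h(Z_\infty) \mid \FF_t] \, dB_t \;=\; e^{-t/2} M_t^{(k+1)} \, dB_t,
$$
which is precisely \eqref{eq:dmkt}. The main obstacle is essentially bookkeeping: interpreting the contraction $M_t^{(k+1)} dB_t$ consistently along the correct tensor index, and justifying Gaussian integration by parts at the level of regularity of $h$. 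In the intended applications $h$ is bounded (in fact $[0,1]$-valued), so after a standard mollification argument all integrals and boundary terms are controlled and the formal manipulations above are rigorous.
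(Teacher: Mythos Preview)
Your proposal is correct and follows essentially the same route as the paper: both rewrite $M_t^{(k)}$ via Gaussian integration by parts as $\int (\nabla^k h)(L_t(x))\,d\gamma(x)$ and then apply the derivative computation behind \eqref{eq:dMt0} with $\nabla^k h$ in place of $h$, noting that mollification handles the regularity issue. Your conditional-expectation phrasing $M_t^{(k)} = \EE[\nabla^k h(Z_\infty)\mid \FF_t]$ is a nice packaging of the same identity but does not change the substance of the argument.
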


\begin{proof}
First assume that $h$ is smooth enough. Integration by parts gives that
\begin{equation}\label{eq:byparts}
\int \nabla^k h(x) d \gamma(x) = \int h(x) H^{(k)}(x) d \gamma(x).
\end{equation}
This gives that
$$
M_t^{(h,k)} := e^{kt/2} \int \nabla^k (h (L_t (x))) d \gamma(x) = \int (\nabla^k h) (L_t (x)) d \gamma(x).
$$
Plugging the function $\nabla^k h$ in place of $h$ in equation \eqref{eq:dMt0} gives
\begin{align*}
d \int (\nabla^k h) (L_t (x)) d \gamma(x) ~& \stackrel{\eqref{eq:altdefmt}}{=} d M^{(\nabla^k h,0)} \\
& \stackrel{\eqref{eq:dMt0}}{=} M_t^{(\nabla^k h,1)} d B_t \\
& \stackrel{ \eqref{eq:defmt1}}{=} \int x \otimes \bigl ((\nabla^k h) (L_t(x)) \bigr) d \gamma(x) d B_t \\
& = \int \nabla \bigl ((\nabla^k h) (L_t(x)) \bigr) d \gamma(x) d B_t \\
& = e^{-t/2} \int \bigl ((\nabla^{k+1} h) (L_t(x)) \bigr) d \gamma(x) d B_t \stackrel{ \eqref{eq:byparts} }{=} \int h(x) H^{(k)}(x) d \gamma(x) d B_t.
\end{align*}
Equation \eqref{eq:dmkt} follows. In the general case (with no smoothness assumtions), equation \eqref{eq:dmkt} can then be obtained by an approximation argument, but it can also be obtained directly by a straightforward but somewhat tedious calculation using It\^{o}'s formula. For a more rigorous derivation, we refer the reader to \cite{Eldan-twosided}. 
\end{proof}

For two functions $\ff, \gg : \RR^n \to \RR$ and $k \in \ZZ$, define
\begin{equation}\label{eq:defp}
S_t^{(k)} = \left \langle M_t^{(\ff,k)}, M_t^{(\gg,k)} \right \rangle \mbox{ and } p_k(t) := \EE \left [S_t^{(k)} \right ].
\end{equation}
By Ito's formula and \eqref{eq:dmkt}, we have
\begin{align*}
d S_t^{(k)} ~& = M_t^{(\ff,k)} d M_t^{(\gg,k)} + M_t^{(\gg,k)} d M_t^{(\ff,k)} + d[M^\ff, M^\gg]_t \\
& = e^{-t/2} \left (\langle M_t^{(\gg,k)}, M_t^{(k+1,\ff)} d B_t \rangle + \langle M_t^{(\ff,k)}, M_t^{(k+1,\gg)} d B_t \rangle \right ) + e^{-t} \left \langle M_t^{(k+1,\ff)}, M_t^{(k+1,\gg)} \right  \rangle dt.
\end{align*}
By taking expectations, we get
\begin{equation}\label{eq:ptdd1}
p_k'(t) = e^{-t} p_{k+1}(t)
\end{equation}
and by differentiating twice and using the same formula, we finally get
\begin{equation}\label{eq:ptdd2}
p_k''(t) = - p_k'(t) + e^{-2t} p_{k+2}(t).
\end{equation}

\section{Level inequalities}
The main purpose of this section is to prove inequalities which will be used to establish bounds between different time-derivatives of the stochastic processes. As equation \eqref{eq:dmkt} suggests, those will boil down to relations between spatial moments of the function $h_t$. Such relations are often referred to as \emph{level-inequalities}, since they establish relations between the Fourier mass in different energy levels. 

The main new point in this work is that, as it turns out, when we only look for a lower bound on (decoupled) moments, one can improve the bounds which appear in the literature, which give two sided but worse estimates.

For a random vector $X=(X_1,...,X_n)$ in $\RR^n$ which is absolutely continuous with respect to $\gamma$, with density $\rho d \gamma$, we define the relative entropy of $X$ with respect to $\gamma$ as
$$
\KL(X || \gamma) = \int \rho \log (\rho) d \gamma.
$$

At the heart of our proofs is the following lemma.
\begin{lemma} \label{lem:lvl21}
Let $X,Y$ be random vectors in $\RR^n$. Denote 
$$
H_X = \EE \left ( X^{\otimes 2}  - \Id \right ), ~~ H_Y = \EE \left ( Y^{\otimes 2}  - \Id \right ).
$$
Then,
$$
\Tr(H_X H_Y) \geq - 20 \Bigl ( \KL(X||\gamma) + \KL(Y||\gamma) \Bigr ).
$$
\end{lemma}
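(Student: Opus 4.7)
The plan is to reduce the lemma to the case where $X$ and $Y$ are Gaussian, and then, via diagonalization, to a scalar inequality.

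First I would observe that $H_X, H_Y$ depend only on the second moment matrices, and that among distributions with prescribed first two moments, the Gaussian minimizes $\KL(\cdot\|\gamma)$: this follows from the identity $\KL(X\|\gamma) = \tfrac{1}{2}\EE|X|^2 - h(X) + \tfrac{n}{2}\log(2\pi)$ together with the fact that the Gaussian maximizes differential entropy at fixed covariance. Thus I may replace $X$ by the Gaussian $\tilde X$ with the same mean and covariance, preserving $H_X$ while only weakening the entropy, and similarly for $Y$; it suffices to prove the inequality for $X \sim \cN(a, \Sigma)$ and $Y \sim \cN(b, T)$.

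For such Gaussians, writing $H_X = (\Sigma - \Id) + aa^T$ and expanding gives
$$\Tr(H_X H_Y) = \Tr((\Sigma-\Id)(T-\Id)) + a^T(T-\Id)a + b^T(\Sigma-\Id)b + (a\cdot b)^2.$$
The last term is nonnegative, and the middle two are each at least $-|a|^2$ and $-|b|^2$ since $\Sigma, T \succeq 0$; this loss is absorbed with room to spare by the $\tfrac12(|a|^2+|b|^2)$ contribution of the means to $\KL_X + \KL_Y$. So it remains to prove the centered case $\Tr((\Sigma - \Id)(T - \Id)) \geq -C \bigl(\Tr\Sigma - n - \log\det\Sigma + \Tr T - n - \log\det T\bigr)$ for some absolute constant.

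Next, diagonalizing $\Sigma - \Id = U D U^T$ and $T - \Id = V E V^T$ with eigenvalues $d_i, e_j \in (-1, \infty)$, I would write
$$\Tr((\Sigma - \Id)(T - \Id)) = \sum_{i,j} d_i e_j\, p_{ij}, \qquad p_{ij} := (U^T V)_{ij}^2,$$
noting that $(p_{ij})$ is doubly stochastic; hence by Birkhoff's theorem and the rearrangement inequality the sum is at least $\min_\pi \sum_i d_i e_{\pi(i)}$. Since $\Tr\Sigma - n - \log\det\Sigma = \sum_i \psi(d_i)$ with $\psi(t) := t - \log(1+t)$, the matrix inequality reduces to a single scalar claim: $de + C(\psi(d) + \psi(e)) \geq 0$ on $(-1, \infty)^2$. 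I would verify this by a critical-point analysis: for $C \geq 1$ the only interior critical point of the gradient equations is the origin (the other root is $d = e = -(C+1)$, outside the domain), the value there is zero, and the function blows up on the boundary $d, e \to -1$ and at infinity.

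The main obstacle is the non-commutativity of $\Sigma$ and $T$; this is precisely what the bistochastic identity for the trace combined with the rearrangement inequality is designed to circumvent, effectively reducing the matrix analysis to a common eigenbasis at the cost of an arbitrary pairing of eigenvalues.
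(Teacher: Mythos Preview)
Your argument is correct and follows a genuinely different route from the paper's. The paper diagonalizes only $H_X$, invokes Talagrand's transport--entropy inequality $W_2(X,\Gamma)^2\le 2\KL(X\|\gamma)$ to control the one-dimensional marginals, and then handles each coordinate by a case split (large versus moderate $\EE X_i^2$) together with the triangle inequality for $W_2$ and Cauchy--Schwarz. Your proof instead first reduces to Gaussians via the maximum-entropy principle, then dispatches the mean terms directly, diagonalizes both $\Sigma-\Id$ and $T-\Id$, and uses the bistochastic identity $\Tr((\Sigma-\Id)(T-\Id))=\sum_{i,j}d_ie_j\,W_{ij}^2$ to reduce to the scalar inequality $de+C(\psi(d)+\psi(e))\ge 0$ with $\psi(t)=t-\log(1+t)$, which you verify by calculus.

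Two small remarks. First, your critical-point claim ``for $C\ge 1$ the only interior critical point is the origin'' is literally correct only for $C>1$; at $C=1$ the equations degenerate to a whole curve $e=-d/(1+d)$ on which $F\equiv 0$. This is harmless, since any $C>1$ suffices (and the lemma's constant $20$ corresponds to $C=10$). Second, the rearrangement inequality is not actually needed: linearity in $p$ over the Birkhoff polytope already gives $\sum_{i,j}d_ie_jp_{ij}\ge\min_\pi\sum_id_ie_{\pi(i)}$, and then the scalar bound applied to each pair $(d_i,e_{\pi(i)})$ finishes, since $\sum_i\psi(e_{\pi(i)})=\sum_i\psi(e_i)$.

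Your approach is self-contained (no transport inequality needed) and in fact yields the lemma with constant $2$ rather than $20$; the paper's approach trades a slightly worse constant for invoking a standard tool and avoiding the Gaussian reduction.
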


For the proof of this lemma, we need the entropy-transportation inequality due to Talagrand \cite{Talagrand96}. For two random vectors $X$ and $Y$ in $\RR^n$ we define the Wasserstein-2 distance between them as
$$
\mathrm{W}_2(X,Y) = \inf_{(\tilde X,\tilde Y)} \sqrt{\EE \left [ |\tilde X - \tilde Y|_2^2  \right ]},
$$
where the infimum is taken over all random vectors $(\tilde X, \tilde Y)$ in $\RR^{2n}$ whose marginals on the first and last $n$ coordinates are equal to $X$ and $Y$ respectively. The following is proven in \cite{Talagrand96}.

\begin{theorem} \label{thm:transport-ent}
Let $\Gamma$ be a standard Gaussian random vector in $\RR^n$ and let $X$ be a random vector such that $\KL(X || \gamma) < \infty$. Then,
$$
\mathrm{W}_2 (X, \Gamma)^2 \leq 2 \KL(X || \Gamma).
$$
\end{theorem}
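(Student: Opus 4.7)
The plan is to follow a stochastic approach via the \emph{F\"ollmer drift}, which fits naturally with the Brownian-motion framework already at work in the paper. Write the law of $X$ as $\rho\, d\gamma$ with $\rho\geq 0$ and $\int\rho\, d\gamma=1$, so that $\KL(X\|\gamma)=\int \rho\log \rho\, d\gamma$. Let $(B_t)_{t\in[0,1]}$ be a standard Brownian motion in $\RR^n$ with filtration $\FF_t$, consider the positive martingale $N_t=\EE[\rho(B_1)\mid\FF_t]$, and use It\^o's representation to write $dN_t=\langle v_t, dB_t\rangle$ for some adapted $v_t$. Set $u_t := v_t/N_t$; then $N_t$ is the stochastic exponential of $\int_0^t\langle u_s, dB_s\rangle$.

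Next I would perform a Girsanov change of measure. Under $d\tilde{\PP}:=\rho(B_1)\,d\PP$, Girsanov's theorem gives that $\tilde B_t := B_t-\int_0^t u_s\, ds$ is a standard Brownian motion, while the marginal law of $B_1$ remains $\rho\cdot\gamma$, i.e. the law of $X$. So under $\tilde{\PP}$ the pair $(B_1, \tilde B_1)$ is a coupling of $X$ with a standard Gaussian $\Gamma$. Two identities drive the argument. Applying It\^o to $\log N_t$ and rewriting $dB_t = d\tilde B_t + u_t\, dt$ produces
$$
\log \rho(B_1) \;=\; \log N_1 \;=\; \int_0^1 \langle u_s, d\tilde B_s\rangle + \tfrac{1}{2}\int_0^1 |u_s|^2\, ds,
$$
and taking $\tilde{\EE}$ (the stochastic integral against $\tilde B$ has zero mean) yields the entropy formula
$$
\KL(X\|\gamma) \;=\; \tilde{\EE}\bigl[\log\rho(B_1)\bigr] \;=\; \tfrac{1}{2}\,\tilde{\EE}\!\int_0^1 |u_s|^2\, ds.
$$
At the same time $B_1-\tilde B_1 = \int_0^1 u_s\, ds$, so Cauchy--Schwarz gives the pointwise bound $|B_1-\tilde B_1|^2\leq \int_0^1 |u_s|^2\, ds$.

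Combining these two ingredients,
$$
\mathrm{W}_2(X,\Gamma)^2 \;\leq\; \tilde{\EE}\, |B_1-\tilde B_1|^2 \;\leq\; \tilde{\EE}\!\int_0^1 |u_s|^2\, ds \;=\; 2\,\KL(X\|\gamma),
$$
which is the desired inequality. The main technical obstacle will be justifying the Girsanov step and the entropy formula for a general density $\rho$ with only $\KL(X\|\gamma)<\infty$: one needs $N_t$ to be a true (not merely local) martingale and the stochastic integral in the It\^o expansion of $\log N_t$ to have zero mean under $\tilde{\PP}$. The standard remedy is to first treat $\rho$ bounded away from $0$ and $\infty$, where $N_t$ is a bounded martingale and everything is legitimate, and then pass to the limit using truncations $c_\epsilon\bigl((\rho\vee\epsilon)\wedge\epsilon^{-1}\bigr)$ together with the lower semicontinuity of both $\mathrm{W}_2^2$ and $\KL$ under weak convergence of the corresponding laws.
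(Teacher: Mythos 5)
Your proof is correct, but note that the paper does not prove this theorem at all: it simply cites Talagrand's 1996 paper and uses the inequality as a black box. So there is no in-paper proof to compare against; I will compare against Talagrand's cited argument instead.

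Talagrand's original proof proceeds quite differently from yours: he first establishes the one-dimensional case directly, using the monotone (quantile) transport map $T$ pushing $\gamma$ forward to $\rho\gamma$, a change of variables, and the elementary inequality $t - 1 - \log t \geq 0$, and then deduces the $n$-dimensional statement by tensorization, exploiting that both $\mathrm{W}_2^2$ and relative entropy behave subadditively/additively under products and conditioning. Your argument is the F\"ollmer-drift (Lehec) proof: you represent the entropy as the minimal quadratic cost of a drift transforming Brownian motion into the target law, use Girsanov to construct a coupling $(B_1,\tilde B_1)$ of $X$ and $\Gamma$, and close with Cauchy--Schwarz on $B_1-\tilde B_1=\int_0^1 u_s\,ds$. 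Both proofs are sound. Talagrand's is more elementary (no stochastic calculus) and dimension-free via tensorization; yours is more in the spirit of the stochastic machinery already used in the paper (the martingale $M_t^{(h)}$ and the process $Z_t$ are close relatives of the F\"ollmer construction), and it gives the entropy identity $\KL(X\|\gamma)=\tfrac12\tilde\EE\int_0^1|u_s|^2\,ds$ as a by-product, not just the inequality. Your closing remark about first treating $\rho$ bounded away from $0$ and $\infty$ and then passing to the limit is exactly the right way to handle the integrability issues (true-martingale property of $N_t$, zero-mean of the $d\tilde B$ integral); make sure in that limiting argument to also check that $\tilde\EE\int_0^1|u_s|^2\,ds<\infty$ follows from $\KL<\infty$, since that is what licenses the zero-mean step, and lower semicontinuity of $\mathrm{W}_2^2$ then closes the argument.
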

\begin{remark}
For our proof, we will effectively only use the one-dimensional version of the above theorem.
\end{remark}

\begin{proof}[Proof of Lemma \ref{lem:lvl21}]
	By applying a rotation, we may assume without loss of generality that $H_X$ is diagonal. Let $\Gamma$ be a standard Gaussian random variable, and denote $\alpha_i = \mathrm{W}_2^2 (X_i, \Gamma)$ and $\beta_i = \mathrm{W}_2^2 (Y_i, \Gamma)$. Theorem \ref{thm:transport-ent} implies that
	$$
	\sum_{i \in [n]} \left ( \alpha_i + \beta_i \right ) \leq 2 \left ( \KL(X||\gamma) + \KL(Y||\gamma) \right ).
	$$
	Denote $\lambda_i = \EE[X_i^2]$ and $\delta_i = \EE[Y_i^2]$. Let $I = \{i \in [n]; ~ \lambda_i > 1 > \delta_i\}$ and $J = \{i \in [n]; ~ \delta_i > 1 > \lambda_i\}$. We clearly have,
	$$
	\Tr(H_X H_Y) = \sum_{i \in [n]} (\lambda_i - 1)(\delta_i - 1) \geq \sum_{i \in I \cup J} (\lambda_i - 1)(\delta_i - 1).
	$$
	Fix $i \in I \cup J$. The proof will be concluded by showing that 
	\begin{equation}\label{eq:lambdadelta}
	(\lambda_i - 1)(\delta_i - 1) \geq - 10 (\alpha_i + \beta_i).
	\end{equation}
	
	Assume that $i \in I$. The proof for the case $i \in J$ will be analogous. First suppose that $\lambda_i \geq 4$. In that case, 
	$$
	\lambda_i \leq 2(\lambda_i - 2) = 2 \left (\EE X_i^2 - 2 \right ) \leq 2\left (2 \mathrm{W}_2^2 (X_i, \Gamma) + 2 \mathrm{W}_2^2 (\Gamma, 0) - 2 \right ) = 4 \alpha_i.
	$$
	Since by assumption we have $\delta_i < 1$, we get
	\begin{align*}
	\left |(\lambda_i - 1)(\delta_i - 1) \right  | \leq \lambda_i \leq 4 \alpha_i,
	\end{align*}
	which establishes \eqref{eq:lambdadelta}. It remains to consider the case $\lambda_i \leq 4$. Since $\mathrm{W}_2$ is a metric, the triangle inequality implies that $X_i$ and $Y_i$ can be coupled in a way that
	\begin{equation}\label{eq:XYclose}
	\EE[ (X_i - Y_i)^2 ] \leq 2 (\alpha_i + \beta_i).
	\end{equation}
	We also have that
	\begin{equation}\label{eq:sumXY}
	\EE \left [ (X_i + Y_i)^2 \right ] \leq 2 (\lambda_i + \delta_i) \leq 10.
	\end{equation}
	By Cauchy-Schwartz, we have
	\begin{align*}
	2 (\lambda_i - 1)(\delta_i - 1) ~& = (\lambda_i - 1)^2 + (\delta_i - 1)^2 - (\lambda_i - \delta_i)^2 \\
	& \geq - \left (\EE (X_i^2 - Y_i^2) \right )^2 \\
	& = - \bigl (\EE \left [ (X_i - Y_i) (X_i + Y_i) \right ] \bigr )^2 \\
	& \geq - \EE \left [(X_i - Y_i)^2 \right ] \EE \left [ (X_i + Y_i)^2 \right ] \stackrel{\eqref{eq:XYclose} \wedge \eqref{eq:sumXY}}{\geq} -20 (\alpha_i + \beta_i).
	\end{align*}
	Equation \eqref{eq:lambdadelta} follows and the proof is complete.
\end{proof}

\begin{remark}
	Lemma \ref{lem:lvl21} only gives a \emph{lower} bound on the expression $\Tr(H_X H_Y)$. It is not hard to see that a matching upper bound on this expression will not hold true in general, in fact, the best upper bound attainable is
	$$
	\Tr(H_X H_Y) \leq C \KL(X || \gamma) \KL (Y || \gamma).
	$$
	The fact that the lower bound is better is crucial for the proof of our main theorem, and this lower bound lies in the heart of the reason that better bounds for correlations can be attained.
\end{remark}

The above lemma gives us an inequality between the entropy and the second-degree Hermite-Fourier coefficients, which in the case of indicators of sets, can be understood as an inequality between the zeroth and 2nd moments. Our next objective is to "lift" this inequality into an inequality between the first and third levels of energy, valid for monotone sets. This "lifting" is the essence of the main step in \cite{Talagrand96}. However, we are able to provide a shorter and simpler argument towards this lifting, also due to the fact that we work in the Gaussian setting.

For a measurable $f: \RR^n \to \RR$, define
$$
Q^{(k)}(f) = \int f(x) H^{(k)}(x) d \gamma(x).
$$
Our estimate reads,
\begin{proposition} (Level 1:3 inequality) \label{prop:level2}
Let $f,g :\RR^n \to \RR$ be two monotone functions. Then,
$$
\left \langle Q^{(3)}(f),  Q^{(3)}(g) \right \rangle \geq - e^8 \Bigl \langle Q^{(1)}(f), Q^{(1)}(g) \Bigl \rangle \log \left ( \frac{e \sqrt{ \Var[f] \Var[g] }}{ \left  \langle Q^{(1)}(f), Q^{(1)}(g) \right \rangle }\right ).
$$
\end{proposition}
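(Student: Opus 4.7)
The plan is to reduce the Level~1:3 estimate to the Level~0:2 estimate of Lemma~\ref{lem:lvl21}, applied to the partial derivatives $\partial_i f$ and $\partial_i g$ one coordinate at a time. The monotonicity hypothesis is what enables this reduction: it forces $\partial_i f,\partial_i g\geq 0$, so each derivative can be normalized to a probability density with respect to $\gamma$, which is precisely the setting of Lemma~\ref{lem:lvl21}.

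The starting point is the Hermite integration-by-parts identity \eqref{eq:byparts}, $Q^{(k)}(h)=\int\nabla^k h\,d\gamma$. At level $k=3$ this gives $Q^{(3)}(f)_{ijk}=\int\partial_i\partial_j\partial_k f\,d\gamma=Q^{(2)}(\partial_i f)_{jk}$, hence
$$\langle Q^{(3)}(f),Q^{(3)}(g)\rangle_{HS}=\sum_{i\in[n]}\langle Q^{(2)}(\partial_i f),Q^{(2)}(\partial_i g)\rangle_{HS}.$$
Set $a_i:=Q^{(1)}(f)_i=\int\partial_i f\,d\gamma\geq 0$ and $b_i:=Q^{(1)}(g)_i$, and (for indices where both are positive) let $\mu_i,\nu_i$ be the probability measures on $\RR^n$ with $\gamma$-densities $\partial_i f/a_i$ and $\partial_i g/b_i$. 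For $X_i\sim\mu_i,Y_i\sim\nu_i$ one has $\EE[X_i^{\otimes 2}]-\Id=Q^{(2)}(\partial_i f)/a_i$, and likewise for $Y_i$, so Lemma~\ref{lem:lvl21} gives
$$\langle Q^{(2)}(\partial_i f),Q^{(2)}(\partial_i g)\rangle=a_ib_i\,\Tr(H_{X_i}H_{Y_i})\geq -20\,a_ib_i\bigl(\KL(\mu_i\|\gamma)+\KL(\nu_i\|\gamma)\bigr).$$
Summing over $i$ and using the identity $a_i\,\KL(\mu_i\|\gamma)=\mathrm{Ent}_\gamma(\partial_i f)$ produces
$$\langle Q^{(3)}(f),Q^{(3)}(g)\rangle\geq -20\sum_i\bigl(b_i\,\mathrm{Ent}_\gamma(\partial_i f)+a_i\,\mathrm{Ent}_\gamma(\partial_i g)\bigr).$$

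The remaining step, and the one I expect to be the main obstacle, is to bound this weighted entropy sum by $C\langle a,b\rangle\log\bigl(e\sqrt{\Var f\,\Var g}/\langle a,b\rangle\bigr)$. Since a monotone $f\in[0,1]$ can have singular $\partial_i f$ (for instance $f=\mathbf{1}_A$ with $A$ a monotone half-space, so that $\partial_i f$ is a surface measure), I would first regularize via the Ornstein--Uhlenbeck semigroup $f^{(s)}:=P_s f$, for which Cauchy--Schwarz in the OU representation gives $\|\partial_i f^{(s)}\|_\infty\leq 1/\sqrt{1-e^{-2s}}$; the elementary estimate $\mathrm{Ent}_\gamma(F)\leq(\int F\,d\gamma)\log(\|F\|_\infty/\int F\,d\gamma)$ for nonnegative $F$ then controls each term individually. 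To aggregate these per-coordinate bounds into the stated global logarithm without picking up a dimensional factor, I would combine Bessel's inequality $\sum_i a_i^2\leq\Var f$ (which saturates at Hermite level~1) with concavity of $\log$, and then optimize $s$ using the exact scaling $Q^{(k)}(P_s f)=e^{-ks}Q^{(k)}(f)$ before removing the regularization. Carrying out this calibration so that the asymmetric $a_ib_i$ weighting is correctly matched against the symmetric $\sqrt{\Var f\,\Var g}$ factor is the subtle point, and presumably what produces the explicit constant $e^8$ in the statement.
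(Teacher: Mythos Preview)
Your reduction is essentially the paper's: the paper also smooths via $P_t$ (at $t=1$), uses the eigenfunction relation $Q^{(k)}(P_1 f)=e^{-k}Q^{(k)}(f)$, and applies Lemma~\ref{lem:lvl21} coordinatewise to the nonnegative functions $\partial_i f_1,\partial_i g_1$. This coordinatewise step is packaged there as the vectorial Lemma~\ref{lem:geomineq} with $u=\nabla f_1$, $v=\nabla g_1$. Your decomposition $\langle Q^{(3)}(f),Q^{(3)}(g)\rangle=\sum_i\langle Q^{(2)}(\partial_i f),Q^{(2)}(\partial_i g)\rangle$ and the identification of $Q^{(2)}(\partial_i f)/a_i$ with $H_{X_i}$ match the paper exactly.

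The gap is in the aggregation, which you correctly flag as ``the subtle point'' but whose proposed resolution fails. Bounding $\mathrm{Ent}_\gamma(\partial_i f^{(s)})$ via the $L^\infty$ estimate gives, after combining the two symmetric terms,
\[
\sum_i\bigl(b_i\,\mathrm{Ent}_\gamma(\partial_i f^{(s)})+a_i\,\mathrm{Ent}_\gamma(\partial_i g^{(s)})\bigr)\ \le\ \sum_i a_ib_i\log\frac{C_s^2}{a_ib_i}\ =\ \langle a,b\rangle\log\frac{C_s^2}{\langle a,b\rangle}+\langle a,b\rangle\,H(p),
\]
where $p_i=a_ib_i/\langle a,b\rangle$ and $H$ is the Shannon entropy of $p$. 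Neither concavity of $\log$ nor Bessel's inequality $\|a\|_2^2\le\Var f$ controls $H(p)$ dimension-free: for $a_i=b_i=n^{-1/2}$ one has $\langle a,b\rangle=1$ and $\Var f$ of order $1$, yet $H(p)=\log n$. (Note also that the $L^\infty$ bound on $\partial_i P_s f$ already requires $f$ to be bounded, which Proposition~\ref{prop:level2} does not assume.)

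The paper's Lemma~\ref{lem:geomineq} avoids this by bounding entropy with the $L^2$ Jensen inequality $\KL(\tilde v_i\|\gamma)\le\log\int\tilde v_i^2\,d\gamma$ and then aggregating \emph{multiplicatively} rather than additively: one exponentiates to $e^{-\alpha_i/40}\,a_ib_i\le\|\partial_i f_1\|_2\,\|\partial_i g_1\|_2$, sums over $i$, applies Cauchy--Schwarz to get $\sum_i e^{-\alpha_i/40}a_ib_i\le\|\nabla f_1\|_2\|\nabla g_1\|_2$, and only then takes a logarithm via Jensen. The spectral bound $\int|\nabla f_1|^2\,d\gamma\le\Var[f]/(2e)$, valid for any $f\in L^2(\gamma)$, supplies the variance. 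The ordering ``exponentiate, sum, Cauchy--Schwarz, then $\log$'' is precisely what kills the Shannon-entropy term; summing the linear entropy bounds first, as you propose, makes the dimension enter irretrievably.
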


The proof of Proposition \ref{prop:level2} is based on the following vectorial inequality.
\begin{lemma} \label{lem:geomineq}
	Let $v(x),u(x): \RR^n \to \RR_+^k$, such that
	$$
	\left \langle \int v(x) d \gamma(x), \int u(x) d \gamma(x) \right \rangle \leq \eps.
	$$
	Then one has
	\begin{equation}\label{eq:mom3}
	\Bigl \langle \int  v(x) \otimes (x^{\otimes 2} - \Id) d \gamma(x),  \int u(x) \otimes (x^{\otimes 2} - \Id) d \gamma(x) \Bigr \rangle \geq -  20 \eps \log \left  (\frac{ \int |v|_2^2 d \gamma \int |u|_2^2 d \gamma }{\eps^2} \right ).
	\end{equation}
\end{lemma}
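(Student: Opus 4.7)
The plan is to apply Lemma~\ref{lem:lvl21} coordinate-by-coordinate in $j\in[k]$ and then aggregate the resulting term-wise lower bounds via a Jensen--Cauchy--Schwarz trick. For each $j$ with $a_j := \int v_j\,d\gamma > 0$ and $b_j := \int u_j\,d\gamma > 0$, let $X_j, Y_j$ be random vectors in $\RR^n$ with densities $v_j/a_j$ and $u_j/b_j$ with respect to $\gamma$. Writing $V_j := \int v_j(x)(x^{\otimes 2}-\Id)\,d\gamma(x)$ and $U_j$ analogously, one has $V_j = a_j H_{X_j}$ and $U_j = b_j H_{Y_j}$, so Lemma~\ref{lem:lvl21} gives $\Tr(V_j U_j) \geq -20\, a_j b_j\bigl(\KL(X_j\|\gamma) + \KL(Y_j\|\gamma)\bigr)$. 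Summing on $j$, the claim reduces to bounding $\sum_j a_j b_j\bigl(\KL(X_j\|\gamma) + \KL(Y_j\|\gamma)\bigr)$ from above by an expression of the form $\eps \log\bigl(\int|v|_2^2\,d\gamma \cdot \int|u|_2^2\,d\gamma /\eps^2\bigr)$.

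I would then replace each divergence by its $\chi^2$-logarithm upper bound: by concavity of the logarithm applied along the law of $X_j$,
$$
\KL(X_j\|\gamma) = \int (v_j/a_j)\log(v_j/a_j)\,d\gamma \leq \log \int (v_j/a_j)^2\,d\gamma = \log(\alpha_j/a_j^2),
$$
with $\alpha_j := \int v_j^2\,d\gamma$, and analogously $\KL(Y_j\|\gamma) \leq \log(\beta_j/b_j^2)$ for $\beta_j := \int u_j^2\,d\gamma$. Setting $s_j := a_j b_j$, $T_j := \alpha_j \beta_j$, $\sigma := \sum_j s_j$ (so $\sigma\leq\eps$ by hypothesis), $C := \sum_j \alpha_j = \int |v|_2^2\,d\gamma$ and $D := \sum_j \beta_j = \int |u|_2^2\,d\gamma$, the problem reduces to the purely algebraic inequality
$$
\sum_j s_j \log(T_j/s_j^2) \leq \sigma \log(CD/\sigma^2).
$$

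The maneuver I would use for the latter is to factor a square root out of the logarithm: $\log(T_j/s_j^2) = 2\log(\sqrt{T_j}/s_j)$. Applying Jensen's inequality to the concave logarithm with probability weights $s_j/\sigma$ yields
$$
\sum_j s_j \log(\sqrt{T_j}/s_j) \leq \sigma \log\Bigl(\tfrac{1}{\sigma}\sum_j \sqrt{T_j}\Bigr) = \sigma \log\Bigl(\tfrac{1}{\sigma}\sum_j\sqrt{\alpha_j\beta_j}\Bigr),
$$
and Cauchy--Schwarz gives $\sum_j \sqrt{\alpha_j\beta_j} \leq \sqrt{\sum_j\alpha_j}\sqrt{\sum_j\beta_j} = \sqrt{CD}$, producing $\sigma\log(CD/\sigma^2)$ after multiplying by $2$. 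Finally, since $\sigma\leq\eps$ and $x\mapsto x\log(CD/x^2)$ is non-decreasing on $(0, \sqrt{CD}/e]$, in the relevant regime one may replace $\sigma$ by $\eps$ and obtain the desired bound $\sum_j\Tr(V_j U_j) \geq -20\eps\log(CD/\eps^2)$.

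The step I expect to be the main obstacle is the algebraic inequality above: the most direct Jensen application (putting weights $s_j/\sigma$ straight on $\log(T_j/s_j^2)$) brings in a term $\sum_j T_j/s_j$ which cannot be controlled without a lower bound on $\min_j s_j$, and a naive crude bound on the entropy $-\sum s_j\log s_j$ leaks a stray $\log k$. The trick of absorbing one copy of $s_j$ under a square root so that Cauchy--Schwarz can decouple $\sqrt{\alpha_j\beta_j}$ cleanly into $\sqrt{C}\sqrt{D}$ is what makes the resulting bound $k$-free and delivers the correct shape $\log(CD/\eps^2)$. The boundary regime $\eps > \sqrt{CD}/e$ is a secondary concern that can be absorbed into the universal constant, since there the target bound is mild.
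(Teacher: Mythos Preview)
Your proposal is correct and follows essentially the same route as the paper. Both proofs apply Lemma~\ref{lem:lvl21} to the normalized densities $v_j/a_j$, $u_j/b_j$ coordinate-by-coordinate, bound the KL divergence by the log of the $\chi^2$-divergence via Jensen, and then aggregate using the probability weights $s_j/\sigma$ together with Cauchy--Schwarz on $\sum_j\sqrt{\alpha_j\beta_j}$; the only cosmetic difference is that the paper exponentiates the per-coordinate bound and applies Jensen to the convex exponential, whereas you stay in log-space and apply Jensen to the concave logarithm with argument $\sqrt{T_j}/s_j$ --- these are dual formulations of the same step. Your handling of the passage from $\sigma$ to $\eps$ via monotonicity of $x\mapsto x\log(CD/x^2)$ is also the same as the paper's opening reduction (and your explicit remark about the boundary regime $\eps>\sqrt{CD}/e$ is, if anything, more careful than the paper's one-line ``by monotonicity'').
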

\begin{proof}
	By the monotonicity of the right hand side of \eqref{eq:mom3} with respect to $\eps$, we may clearly assume that $\sum_i \int v_i(x) d \gamma(x) \int u_i(x) d \gamma(x) = \eps$. For $i \in [n]$, we denote
	$$
	\alpha_i := \frac{\Tr \left ( \int \left (x^{\otimes 2} - \Id \right ) v_i(x) d \gamma(x) \int \left (x^{\otimes 2} - \Id \right ) u_i(x) d \gamma(x) \right ) }{ \int v_i d \gamma \int u_i d \gamma } 
	$$
	(and $\alpha_i = 0$ when the denominator is zero). Remark that the left hand side of \eqref{eq:mom3} is equal to 
	$$
	\sum_i \Tr \left ( \int \left (x^{\otimes 2} - \Id \right ) v_i(x) d \gamma(x) \int \left (x^{\otimes 2} - \Id \right ) u_i(x) d \gamma(x) \right ),
	$$
	so the lemma will be concluded by showing that 
	\begin{equation}\label{eq:nts3}
	\sum_i \alpha_i \int v_i d \gamma \int u_i d \gamma \geq - 20 \eps \log \left  (\frac{ \int |v|_2^2 d \gamma \int |u|_2^2 d \gamma }{\eps^2}  \right ).
	\end{equation}	
	Define $\tilde v_i(x) = \frac{v_i(x)}{\int v_i d \gamma}$ and $\tilde u_i(x)$ similarly. An application of Lemma \ref{lem:lvl21} gives
	$$
	\alpha_i \geq - 20 \left (\KL(\tilde v_i, \gamma) + \KL(\tilde u_i, \gamma) \right ) \geq - 20 \left (\log \int \tilde v_i(x)^2 d \gamma + \log \int \tilde u_i(x)^2 d \gamma \right ),
	$$
	where the second inequality uses Jensen's inequality.
	This gives
	$$
	e^{-\frac{1}{40}\alpha_i} \int v_i(x) d \gamma \int u_i(x) d \gamma \leq \sqrt{\int v_i(x)^2 d \gamma \int u_i(x)^2 d \gamma}. 
	$$
	Denote $\beta_i = \frac{1}{\eps} \int v_i(x) d \gamma \int u_i(x) d \gamma$. The assumption $\sum_i \int v_i(x) d \gamma \int u_i(x) d \gamma  = \eps$ gives $\sum_i \beta_i = 1$. Now, that last display gives
	$$
	\sum_i e^{- \frac{1}{40} \alpha_i} \beta_i \leq 1/\eps \sum_i \sqrt{\int v_i(x)^2 d \gamma \int u_i(x)^2 d \gamma} \leq \frac{1}{\eps} \sqrt{\int |v|_2^2 d \gamma \int |u|_2^2 d \gamma } 
	$$
	and thus, by convexity,
	$$
	- \eps \sum_i \beta_i \alpha_i \leq 40 \eps \log \left ( \sum_i \beta_i e^{-\frac{1}{40} \alpha_i}   \right ) \leq 40 \eps \log \left ( \frac{1}{\eps} \sqrt{\int |v|_2^2 d \gamma \int |u|_2^2 d \gamma }  \right ),
	$$
	which implies \eqref{eq:nts3}, and finishes the proof.
\end{proof}

\begin{proof}[Proof of Proposition \ref{prop:level2}]
	Fix some $t>0$ and define
	$$
	f_t(x) = P_t[f](x), ~~ g_t(x) = P_t[g](x)
	$$
	where $P_t$ is the Ornstein-Uhlenbeck semigroup,
	\begin{equation}\label{eq:defOU}
	P_t[f](x) := \int f \left ( e^{-t/2} x + \sqrt{1-e^{t}}y \right ) d \gamma(y).
	\end{equation}
	Remark that $H^{(k)}(x)$ is an eigenfunction of the generator $L = \Delta - x \cdot \nabla$ with eigenvalue $-k$. Since $P_t$ is self adjoint, we have
	$$
	\int  H^{(3)}(x) P_t[f] (x) d \gamma(x) = \int f(x) P_t [H^{(3)}](x) d \gamma(x) = e^{-3t} Q^{(3)}(f).
	$$
	Further note that integration by parts yields,
	$$
	\int H^{(3)}(x) f_t(x) d \gamma(x) = \int H^{(2)}(x) \otimes \nabla f_t(x) d \gamma(x).
	$$
	The last two displays yield
	\begin{equation}\label{eq:Q3bp}
	Q^{(3)}(f) = e^{3t}  \int \left (x^{\otimes 2} - \Id \right ) \otimes \nabla f_t(x) d \gamma(x).
	\end{equation}
	A similar argument gives,
	\begin{equation}\label{eq:Q1bp}
	Q^{(1)}(f) = e^t \int x f_t(x) d \gamma(x) = e^t \int \nabla f_t(x) d \gamma(x).
	\end{equation}
	Moreover, consider the Hermite decomposition $f(x) = \sum_{\ell} \alpha_\ell H_\ell(x)$, so that $f_t = \sum_{\ell} \alpha_\ell e^{- |\ell| t} H_\ell(x)$, we have
	$$
	\int |\nabla f_t|^2 d \gamma = - \int f_t L f_t d \gamma = \sum_{\ell} e^{-2 |\ell| t} \alpha_\ell^2 |\ell| \leq \left (\sup_{q\geq 0} q e^{-2 t q} \right ) \sum_{\ell=1}^\infty  \alpha_\ell^2 \leq \frac{\Var[f]}{2te}.
	$$
	Taking $t=1$ and invoking Lemma \ref{lem:geomineq} with $u(x) = \nabla f_t(x)$ and $v(x) = \nabla g_t(x)$ gives
	\begin{align*}
	\left \langle Q^{(3)}(f), Q^{(3)}(g) \right \rangle  &~~ \stackrel{ \eqref{eq:Q3bp} }{=} e^6 \Tr \left ( \left (\int \left (x^{\otimes 2} - \Id \right ) \otimes \nabla f_1(x) d \gamma(x) \right ) \left  ( \int \left (x^{\otimes 2} - \Id \right ) \otimes \nabla g_1(x) d \gamma(x).   \right ) \right ) \\
	& \stackrel{ \eqref{eq:mom3} \wedge \eqref{eq:Q1bp} }{\geq} - 40 e^{6} \left (e^{-2} \langle Q^{(1)}(f), Q^{(1)}(g) \rangle \log \left ( \frac{e \sqrt{\Var[f] \Var[g]}  }{2 \langle Q^{(1)}(f), Q^{(1)}(g) \rangle } \right )  \right).
	\end{align*}
	This completes the proof.
\end{proof}

\begin{remark}
A very small modification of the above proof, where Lemma \ref{lem:lvl21} is replaced by the level-1 inequality reproduces the Gaussian variant of the bound between levels 1 and 2 due to Talagrand, via an arguably simpler route. 
\end{remark}

\section{Proof of the main bounds}

In this section we proof our main bounds in the Gaussian setting. We begin with a technical lemma whose proof is postpones to the end of the section.
\begin{lemma} \label{lem:gronwall}
	Let $p:[0,\infty) \to [0,1]$ be twice differentiable. Suppose that $p(0) \in (0,1)$ and that there exists $K>0$ such that for all $x$,
	$$
	p''(t) \geq -p'(t) - K p(t) \log (e/p(t)).
	$$
	Then for all $t \leq \min \left (\frac{1}{4 \sqrt{K\log(2e/p(0))}}, \frac{p(0)}{4 |p'(0)|} \right )$ one has $p(t) \geq p(0) / 2$.
\end{lemma}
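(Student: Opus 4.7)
The plan is to argue by contradiction. Set $t_0 := \min\bigl(\frac{1}{4\sqrt{K\log(2e/p(0))}},\, \frac{p(0)}{4|p'(0)|}\bigr)$ and $T := \inf\{t > 0 : p(t) \leq p(0)/2\}$. Assuming for contradiction that $T < t_0$, one has $T > 0$ and by continuity $p(T) = p(0)/2$. The key rewriting is to multiply the hypothesis by $e^t$, which gives
\begin{equation*}
(e^t p'(t))' \;\geq\; -K e^t \phi(p(t)), \qquad \phi(x) := x \log(e/x),
\end{equation*}
with $\phi$ increasing on $(0,1]$.

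Next, let $s \in [0, T)$ be a point where $p$ attains its maximum on $[0, T]$ (such a point exists in $[0, T)$ since $p(T) = p(0)/2 < p(0)$), and set $p_* := p(s) \geq p(0)$. Integrating the displayed inequality from $s$ to $t$, using $\phi(p(u)) \leq \phi(p_*)$ on $[s, T]$, and then integrating once more from $s$ to $T$ together with the elementary bounds $1 - e^{-\tau} \leq \tau$ and $\tau - (1 - e^{-\tau}) \leq \tau^2/2$ for $\tau \geq 0$, one obtains
\begin{equation*}
p(T) - p_* \;\geq\; -|p'(s)|\,(T-s) - \frac{K}{2}\,\phi(p_*)\,(T-s)^2.
\end{equation*}

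The argument then splits on the location of $s$. When $s = 0$ (so $p_* = p(0)$), the bounds $T < p(0)/(4|p'(0)|)$ and $T^2 < 1/(16K\log(2e/p(0))) \leq 1/(2K\log(e/p(0)))$ show that $|p'(0)|\,T$ and $(K/2)\phi(p(0))\,T^2$ are each strictly less than $p(0)/4$, yielding $p(T) > p(0)/2$ and contradicting $p(T) = p(0)/2$. When $s \in (0, T)$, first-order optimality gives $p'(s) = 0$, and using $p_* - p(0)/2 \geq p_*/2$ (which holds since $p_* \geq p(0)$) the estimate rearranges to $(T-s)^2 \geq 1/(K\log(e/p_*)) \geq 1/(K\log(2e/p(0)))$. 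Hence $T - s \geq 1/\sqrt{K\log(2e/p(0))} \geq 4 t_0 > 4 T$, which is impossible since $T - s \leq T$.

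The main obstacle lies in the interior-maximum case. The key conceptual point is that the second-order lower bound prevents $p$ from falling too quickly after a local maximum: a descent from $p_*$ all the way down to $p(0)/2$ requires at least $\sim 1/\sqrt{K\log(e/p_*)}$ units of time, and the constants in $t_0$ are chosen precisely so that this deterministic lower bound always beats $t_0$. The boundary case $s = 0$ then falls out of the same integration by separately absorbing $|p'(0)|$ and $\phi(p(0))$ into the two pieces of $t_0$.
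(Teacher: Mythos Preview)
Your proof is correct and takes a genuinely different route from the paper's. Both arguments are by contradiction, assuming the first hitting time $T$ of the level $p(0)/2$ satisfies $T<t_0$, but the mechanics diverge from there.

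The paper first reduces to the case where $p(0)=\max_{[0,T]}p$ by shifting the time origin to an interior local maximum if one exists (using that $p'=0$ there and that $x\mapsto 1/\sqrt{\log(2e/x)}$ is monotone). Then it applies the mean value theorem twice: once to locate $t_1\in(0,T)$ with $p'(t_1)\le -p(0)/(2T)$, and again to locate $t_2\in(0,t_1)$ with $p''(t_2)\le -p(0)/(4T^2)$. Since $T^2<1/(16K\log(2e/p(0)))$, this forces $p''(t_2)<-Kp(t_2)\log(e/p(t_2))$ while $p'(t_2)\le 0$, contradicting the differential inequality pointwise at $t_2$.

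Your argument instead rewrites the hypothesis as $(e^tp')'\ge -Ke^t\phi(p)$ and integrates twice from the maximum $s$ of $p$ on $[0,T]$, obtaining the clean Taylor-type bound $p(T)\ge p_*-|p'(s)|(T-s)-\tfrac{K}{2}\phi(p_*)(T-s)^2$. The case split on $s=0$ versus $s\in(0,T)$ then replaces the paper's ``shift to a local maximum'' reduction. This is the standard Gronwall-style maneuver and is arguably cleaner: it avoids the somewhat delicate chain of mean-value applications and the auxiliary inequality $p'(t_1)-p'(s)\le\tfrac12 p'(t_1)$, and it makes transparent exactly where each factor in $t_0$ is spent. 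The paper's pointwise-contradiction approach, on the other hand, never integrates at all and shows directly that the second-order hypothesis is violated somewhere, which is conceptually nice but requires more bookkeeping.
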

We are now ready to prove our theorems.

\begin{proof}[Proof of Theorem \ref{thm:mainboundTalGauss}]
Define 
$$
p(t) := \EE \left [ \left \langle  M_t^{(f,1)}, M_t^{(g,1)} \right \rangle \right ],
$$
where $M_t^{(f,1)}, M_t^{(g,1)}$ are defined as in equation \eqref{eq:defmt1}.
Equation \eqref{eq:ptdd2} gives
$$
p''(t) = - p'(t) + e^{-2t} \EE \left [\left \langle M_t^{(f,3)}, M_t^{(g,3)} \right \rangle \right ].
$$
Remark that $M_t^{(f, k)} = e^{kt/2} Q^{(k)} (f_t)$, where $f_t$ is defined as in \eqref{eq:defht}, and the same is true for $g$. An application of Proposition \ref{prop:level2} on the functions $f_t$ and $g_t$ gives that almost surely, for all $t$, one has
$$
\left \langle M_t^{(f,3)}, M_t^{(g,3)} \right \rangle \geq - e^{8+2t} \left  \langle  M_t^{(f,1)}, M_t^{(g,1)} \right \rangle \log \left ( \frac{e^t}{ \left \langle  M_t^{(f,1)}, M_t^{(g,1)} \right \rangle } \right ).
$$
Since $s \to - s \log (1/s)$ is convex, Jensen's inequality gives
$$
\EE \left [\left \langle M_t^{(f,3)}, M_t^{(g,3)} \right \rangle \right ] \geq - e^{8+2t} p(t) \log (e^t/p(t)),
$$
concluding that 
$$
p''(t) \geq -p'(t) - e^8 p(t) \log (e/p(t)), ~~ \forall t \leq 1.
$$ 
Equations \eqref{eq:MtMh} and \eqref{eq:ptdd1} give
$$
p(0) = \langle \WW_1(f), \WW_1(g) \rangle, ~~  p'(0) = \left \langle \WW_2(f), \WW_2(g) \right \rangle_{HS}.
$$
Via an application of lemma \ref{lem:gronwall}, the two last displays imply that 
$$
p(t) \geq \frac{1}{2} \langle \WW_1(f), \WW_1(g) \rangle, ~~ \forall t \leq \min \left (\frac{e^{-8}}{\sqrt{ \log\left (\frac{2e}{\langle \WW_1(f), \WW_1(g) \rangle} \right ) }}, \frac{\langle \WW_1(f), \WW_1(g) \rangle}{4 |\left \langle \WW_2(f), \WW_2(g) \right \rangle_{HS}|} \right ).
$$ 
Plugging this estimate into Proposition \ref{prop:intcov} finishes the proof.
\end{proof}

\begin{proof}[Proof of Theorem \ref{thm:mainboundG}]
First assume that $f,g$ are $C_\infty$-differentiable. Fix $i \in [n]$. Integration by parts gives that
$$
M_t^{\partial_i f} = e^{t/2} \int \partial_i f_t d \gamma = e^{t/2} \int x_i f_t d \gamma = \langle M_t^{(f,1)}, e_i \rangle.
$$
Proposition \eqref{prop:intcov} therefore gives that for all $t_0 > 0$,
\begin{equation}\label{eq:intcov2}
\Cor(f,g) \geq  \sum_i \int_0^{t_0} e^{-t} \EE[M_t^{\partial_i f} M_t^{\partial_i g}] dt. 
\end{equation}
Now,
$$
M_t^{(\partial_i f,2)} = e^{t} \int (\partial_i f)(L_t(x)) (x^{\otimes 2} - \Id) d \gamma = e^{3t/2} \int \partial_i f_t (x^{\otimes 2} - \Id) d \gamma.
$$
Denote $\ff(x) = P_1[\partial_i f_t]$, where $P_t$ is the Ornstein-Uhlenbeck operator defined as in \eqref{eq:defOU}. Since $H^{(k)}(x)$ are eigenfunctions of $P_t$ with eigenvalues $e^{-kt}$, we have
\begin{equation}\label{eq:Ptev1}
M_t^{(\partial_i f)} = \int P_1[\partial_i f_t] d \gamma
\end{equation}
and
\begin{equation}\label{eq:Ptev2}
M_t^{(\partial_i f,2)} = e^{2} e^{3t/2} \int P_1[\partial_i f_t](x) (x^{\otimes 2} - \Id) d \gamma.
\end{equation}
Since $f_t(x) \in [0,1]$ for all $x$, it is easy to verify that $P_1 [\partial_i f_t] \in [0,2]$ for all $x$ and thus
$$
\int \frac{\ff(x)}{\int \ff d \gamma} \log \frac{\ff(x)}{\int \ff d \gamma} d \gamma(x) \leq \log \frac{\int \ff(x)^2 d \gamma(x)}{\left (\int \ff(x) d \gamma \right )^2} \leq \log \left ( \frac{2}{\int \ff d \gamma} \right ) = \log \left ( \frac{2}{M_t^{\partial_i f}} \right ).
$$
An application of Lemma \ref{lem:lvl21} for $X,Y$ being distributed according to the laws $\frac{\ff(x)}{\int \ff d \gamma}, \frac{\gg(x)}{\int \gg d \gamma}$ respectively thus gives
$$
\left \langle Q^{(2)}(\ff), Q^{(2)}(\gg) \right \rangle \geq -20 M_t^{\partial_i f} M_t^{\partial_i g} \log \left ( \frac{1}{M_t^{\partial_i f} M_t^{\partial_i g}} \right ).
$$
Together with equation \eqref{eq:Ptev2}, this gives
\begin{equation}\label{eq:prejensen}
\left \langle M_t^{(\partial_i f,2)}, M_t^{(\partial_i f,2)} \right \rangle_{HS} \geq -20 e^{2+3t/2} M_t^{\partial_i f} M_t^{\partial_i g} \log \left ( \frac{1}{M_t^{\partial_i f} M_t^{\partial_i g}} \right ).
\end{equation}
Define $S_t = M_t^{\partial_i f} M_t^{\partial_i g}$ and $p(t) = \EE[S_t]$. Equation \eqref{eq:Ptev1} and integration by parts gives
\begin{equation}\label{eq:ft2-1}
p(0) = \WW_1(f)_i \WW_1(g)_i.
\end{equation}
By equation \eqref{eq:ptdd2}, we have
$$
p''(t) = -p'(t) + e^{-2t} \EE  \left [\left \langle M_t^{(\partial_i f,2)}, M_t^{(\partial_i f,2)} \right \rangle_{HS} \right ].
$$
The bound \eqref{eq:prejensen} and the concavity of the function $x \to x \log(1/x)$ yield
\begin{equation}\label{eq:ft2-2}
p''(t) \geq - p(t) - 20 e^2 p(t).
\end{equation}
Now, remark that
$$
M_0^{(\partial_i f,1)} = \int x (\partial_i f) d \gamma = \WW_2(f) e_i,
$$
which, along with equation \eqref{eq:ptdd1} implies that
$$
p'(0) = \left \langle M_0^{(\partial_i f,1)}, M_0^{(\gg,1)} \right \rangle = \langle \WW_2(f) e_i, \WW_2(g) e_i \rangle.
$$
Combining this with \eqref{eq:ft2-1} and \eqref{eq:ft2-2} and applying Lemma \ref{lem:gronwall} finally gives that
$$
p(t) \geq \frac{1}{2} \WW_1(f)_i \WW_1(g)_i, ~~ \forall t \leq \min \left (\frac{1}{20 e \sqrt{\log(2e/\WW_1(f)_i \WW_1(g)_i)}}, \frac{\WW_1(f)_i \WW_1(g)_i}{4 |\langle \WW_2(f) e_i, \WW_2(g) e_i \rangle|} \right ).
$$
Plugging this into equation \eqref{eq:intcov2} completes the proof in the case that $f,g$ are $C_\infty$-smooth. The general case easily follows by considering the functions $P_\delta[f],P_\delta[g]$ in place of $f,g$ and taking $\delta \to 0$.
\end{proof}

\begin{proof}[Proof of lemma \ref{lem:gronwall}]
Let $t_0 = \min\{t>0; ~p(t) \leq p(0)/2 \}$. We first claim that without loss of generality we may assume that 
\begin{equation}\label{eq:asuumpp}
p(0) = \max_{0 \leq s \leq t_0} p(s).
\end{equation}
Indeed, if we suppose that $p(t)$ has a local maximum at $m \in (0, t_0)$ where $p(m) \geq p(0)$, then we may define $\tilde p(t) = p(t+m)$ and proceed by replacing the function $p$ by $\tilde p$, using the fact that $\tilde p(0)' = 0$ and the monotonicity of the expression $x \to \frac{1}{\sqrt{\log(2e/x) }}$ on $[0,1]$.

Now, assume by contradiction that
\begin{equation}\label{eq:contrasump}
t_0 < \min \left (\frac{1}{2 \sqrt{K \log(2e/p(0))}}, \frac{p(0)}{4 |p'(0)|} \right ).
\end{equation}
By Largange's theorem, there exists $t_1 \in (0,t_0)$ for which $p'(t_1) \leq -\frac{p(0)}{2 t_0}$. If there exists $0 \leq t \leq t_1$ such that $p'(t) \geq 0$, set $s = \max\{t; p'(t) \geq 0 \}$, otherwise set $s=0$. Note that $p'(s) \geq -|p'(0)|$. Remark that by definition of $t_0$ and $t_1$, we have
$$
p'(t_1) \leq -\frac{p(0)}{2 t_0} \leq -2 |p'(0)| \leq 2 p'(s),
$$
which implies that
\begin{equation}\label{eq:pt1}
p'(t_1) - p'(s) \leq \frac{1}{2} p'(t_1).
\end{equation}
Applying Largange's theorem again, we have that there exists $t_2 \in (s,t_1)$ such that
$$
p''(t_2) \leq \frac{p'(t_1) - p'(s)}{t_1-s} \stackrel{ \eqref{eq:pt1} }{\leq} \frac{p'(t_1)}{2 t_0} \leq -\frac{p(0)}{4 t_0^2} \leq - K p(0) \log(2e/p(0)).
$$
However, by the fact that $t_2 \leq t_0$, and by \eqref{eq:asuumpp}, we have $p(t_2) \in [p(0)/2, p(0)]$, and therefore since $s \to s \log (e/s)$ is increasing on $[0,1]$,
$$
-K p(0) \log(2e/p(0)) \leq -K p(t_2) \log(e/p(t_2)).
$$
Recalling that $p'(t_2) \leq 0$, the two last displays contradict the assumption of the lemma. This completes the proof.
\end{proof}

\bibliographystyle{alpha}
\bibliography{bib}

\end{document}